\documentclass[a4paper, article,  11pt, oneside]{memoir}

\usepackage[utf8]{inputenc}


\usepackage[backend=bibtex, firstinits=true, url = false, doi=false, isbn=false, url=false, style=alphabetic]{biblatex}
\bibliography{refs.bib}

\usepackage[T1]{fontenc}
\usepackage{textcomp}
\usepackage{xfrac}
\usepackage{mathtools}

\usepackage{tikz}

\usepackage[scaled=0.85]{berasans}
\usepackage[scaled=0.84]{beramono}

\usepackage{booktabs}

\linespread{1.}



\usepackage{titletoc}
\usepackage{titlesec}
\titleformat{\chapter}[display]
  {\normalfont\sffamily\huge\bfseries}
  {\chaptertitlename\ \thechapter}{20pt}{\Huge}
\titleformat{\section}
  {\normalfont\sffamily\Large\bfseries}
  {\thesection}{1em}{}
	\titleformat{\subsection}
  {\normalfont\sffamily\large\bfseries}
  {\thesubsection}{1em}{}

\counterwithout{section}{chapter}
\setcounter{secnumdepth}{4}

\titleformat*{\paragraph}{\bfseries\sffamily}

\usepackage{amssymb, amsfonts, mathrsfs}
 \numberwithin{equation}{section}
\usepackage[amsmath,thmmarks]{ntheorem}
\usepackage[euler-hat-accent, euler-digits]{eulervm}
\usepackage{euscript}
\usepackage[english]{babel}

\usepackage{bbm}
\usepackage{enumitem}
\usepackage{nameref}
\usepackage{hyperref}
\usepackage{multirow}
\usepackage{booktabs}

\usepackage{lastpage, setspace}


\usepackage{graphicx, pst-all}
\usepackage[all]{xy}

\usepackage{tikz}
\usetikzlibrary{matrix}

\usepackage{multicol}
\usepackage{listings}
\usepackage{verbatim}
\usepackage{pdflscape}
\usepackage{color}
\usepackage{bm}

\usepackage{bbm}
\usepackage{setspace}

\DeclareMathOperator{\sgn}{sgn}
\DeclareMathOperator{\Trr}{tr}

\DeclareMathOperator\bl{bl}

\DeclareMathOperator\Stab{Stab}

\DeclareMathOperator\Diag{diag}

\DeclareMathOperator\rk{rk}







\newcommand{\N}{\mathbb{N}}

\newcommand{\R}{\mathbb{R}}

\newcommand\norm[1]{\left\lVert#1\right\rVert}

\newcommand\abs[1]{\left\lvert#1\right\rvert}



\renewcommand{\epsilon}{\ensuremath\varepsilon}


\renewcommand{\phi}{\ensuremath{\varphi}}



\newtheorem{theorem}{\sffamily{Theorem}}[section]
\newtheorem{question}{\sffamily{Question}}[section]

\newtheorem{definition}[theorem]{\sffamily{Definition}}
\newtheorem{corollary}[theorem]{\sffamily{Corollary}}
\newtheorem{lemma}[theorem]{\sffamily{Lemma}}

\newtheorem{remark}[theorem]{\sffamily{Remark }}

\theoremstyle{nonumberplain}
\theorembodyfont{\normalfont}
\theoremsymbol{\ensuremath{\blacksquare}}
\newtheorem{proof}{\sffamily{Proof}}

\newtheorem{profff}{\sffamily{Proof of Theorem \ref{thm:finite}}}
\newtheorem{proffff}{\sffamily{Proof of Corollary \ref{cor:translation}}}

\newtheorem{FirstAidKit}{\sffamily{Proof of Theorem \ref{thm:polyhedral}}}


\makeatletter
\newcommand{\displaybump}{\hbox to \@totalleftmargin{\hfil}}
\makeatother

\frenchspacing

\psset{linewidth=0.4pt}
\psset{arrowsize=3pt}

\SelectTips{cm}{}
\newdir{ >}{{}*!/-5pt/\dir{>}}


\newcommand*{\TitleFont}{
      \usefont{\encodingdefault}{\sfdefault}{b}{n}
      \fontsize{18}{20}
      \selectfont}



\begin{document}
\title{\TitleFont Computation of maximal projection constants}
\author{Giuliano Basso}


%

\maketitle
\begin{abstract}
The linear projection constant \(\Pi(E)\) of a finite-dimensional real Banach space \(E\)
is the smallest number \(C\in [0,+\infty)\) such that \(E\) is a \(C\)-absolute retract in the category of real Banach spaces with bounded linear maps. 
We denote by \(\Pi_n\) the maximal linear projection constant amongst 
\(n\)-dimensional Banach spaces. In this article, we prove that \(\Pi_n\) may be determined by computing eigenvalues of certain two-graphs. 
From this result we obtain that the relative projection constants of codimension \(n\) converge to \(1+\Pi_n\). Furthermore, using the classification of \(K_4\)-free two-graphs, we give an alternative proof of \(\Pi_2=\frac{4}{3}\).
We also show by means of elementary functional analysis that for each integer \(n\geq 1\) there exists a
polyhedral \(n\)-dimensional Banach space \(F_n\) such that \(\Pi(F_n)=\Pi_n\). 

\end{abstract}

\setcounter{section}{0}
\section{Introduction}

\paragraph{\sffamily Overview}
As a consequence of ideas developed by Lindenstrauss, cf. \cite{lindenstrauss1964}, for a finite-dimensional Banach space \(E\subset \ell_\infty(\N)\) the smallest constant \(C\in[0,+\infty)\) such that \(E\) is an absolute \(C\)-Lipschitz retract is completely determined by the linear theory of \(E\).
Indeed, Rieffel, cf. \cite{rieffel2006lipschitz}, established that it is equal to the \textit{linear projection constant} of \(E\), which is the number \(\Pi(E)\in[0,+\infty]\) defined as
\begin{equation*}
\inf \left\{ \norm{P}  \, \mid \,   P\colon \ell^\infty(\N) \to E \textrm{ bounded surjective linear map with } P^2=P \right\}.
\end{equation*}


Linear projections have been the object of study of many researchers and the literature can be traced back to the classical book by Banach, cf. \cite[p.244-245]{banach1932theorie}. The question about the maximal value \(\Pi_n\) of the linear projection constants of \(n\)-dimensional Banach spaces has persisted and is a notoriously difficult one. 
In this article, we establish a formula that relates \(\Pi_n\) with eigenvalues of certain two-graphs. This reduces the problem (in principle) to the classification of certain two-graphs and thus
allows the introduction of tools from graph theory. 
Following this approach, we present an alternative proof of \(\Pi_2=\frac{4}{3}\), see \textsc{\ref{par:determin}},  and we establish that that the relative projection constants of codimension \(n\)  converge to \(1+\Pi_n\), see Corollary \ref{cor:translation}. In the remainder of this overview, we summarize the current state of the theory.

For \(n \geq 1\), define \(\mathsf{Ban}_n\) to be the set of linear isometry classes of \(n\)-dimensional Banach spaces over the real numbers. 
The set \(\mathsf{Ban}_n\) equipped with the Banach-Mazur distance is a compact metric space, cf. \cite{tomczak1989banach}. Thus, the map \(\log \circ \, \Pi\colon \mathsf{Ban}_n \to [0,+\infty) \) is 
1-Lipschitz and consequently for all \(n\geq 1\) the maximal projection constant of order \(n\),
\[\Pi_n:=\max\big\{ \Pi(X) :X\in \mathsf{Ban}_n \big\},\]
is a well-defined real number. Apart form \(\Pi_1=1\), the only known value is \(\Pi_2=\frac{4}{3}\), due to Chalmers and Lewicki, cf. \cite{BruceL2010}. 
There is numerical evidence indicating that \(\Pi_3=(1+\sqrt{5})/2\), cf. \cite[Appendix B]{foucart2017maximal}, but to the author's knowledge, there is no known candidate for \(\Pi_n\) for all \(n \geq 4\). From a result of Kadets and Snobar, cf. \cite{kadets1971certain}, 
\[\Pi_n \leq \sqrt{n}.\]
Moreover, K\"onig, cf. \cite{Konig1985}, has shown that this estimate is asymptotically the
best possible. Indeed, there exists a sequence \((X_{n_k})_{k\geq 1}\) of finite-dimensional real Banach spaces such that  \(\dim(X_{n_k})=n_k\), where \(n_k\to +\infty\) for \(k\to +\infty\), and  
\[\lim_{k \to +\infty} \, \frac{\Pi(X_{n_k})}{\sqrt{n_k}} = 1.\]
There are many non-isometric maximizers of the function \(\Pi_n(\cdot)\), cf. \cite{HermannKoenig2003}. 
A finite-dimensional Banach space is called \textit{polyhedral} if its unit ball is a polytope. 
Equivalently, a finite-dimensional Banach space \((E, \norm{\cdot})\) is polyhedral if there exists an integer \(d\geq 1\) such that 
\((E, \norm{\cdot})\) admits a linear isometric embedding into \(\ell_\infty^d\). 
Using a result of Klee, cf. \cite[Proposition 4.7]{klee1960}, and elementary functional analysis, we show that there exist maximizers of \(\Pi_n(\cdot)\) that are polyhedral, see Theorem \ref{thm:polyhedral}. 

In the 1960s, Gr\"unbaum, cf. \cite{grunbaum1960projection}, calculated \(\Pi(\ell^n_1)\), \(\Pi(\ell^n_2)\) and \(\Pi(X_{_{hex}})\), where \(X_{_{hex}}\) is the 2-plane with the hexagonal norm.
In particular, \(\Pi(X_{_{hex}})=\frac{4}{3}\), which Gr\"unbaum conjectured to be the maximal value of \(\Pi(\cdot)\) amongst \(2\)-dimensional Banach spaces. In 2010, Chalmers and Lewicki presented an intricate proof of Gr\"unbaum's conjecture employing the implicit function theorem and Lagrange multipliers, cf. \cite{BruceL2010}. 


Our main result, see Theorem \ref{thm:finite}, provides a characterization of the number \(\Pi_n\) in terms of certain maximal sums of eigenvalues of two-graphs that are \(K_{n+2}\)-free. In \cite{frankl1984exact},  Frankl and F{\"u}redi give a full description of two-graphs that are \(K_{4}\)-free. Via this description and Theorem \ref{thm:finite} we can derive from first principles that \(\Pi_2=\frac{4}{3}\).
This is done in Section \ref{sec:applo}.

Next, we introduce the necessary notions from the theory of two-graphs that are needed to properly state our main result.

\paragraph{\sffamily Two-graphs} The subsequent definition of a two-graph via cohomology follows Taylor \cite{taylor1977regular}, and Higman \cite{higman1973remark}; see also \cite[Remark 4.10]{SEIDEL1991146}. Let \(V\) denote a finite set. For each integer \(n\geq 0\) we set \[E_n(V):=\big\{ B\subset V : \abs{B}=n \big\} \textrm{ and  }\mathcal{E}_n(V):=\big\{ f\colon E_n(V)\to \mathbbm{F}_2\big\},\]
where \(\mathbbm{F}_2\) denotes the field with two elements. Elements of \(\mathcal{E}_2(V)\) are finite simple graphs. If \(n\) is strictly greater than the cardinality of \(V\), then \(\mathcal{E}_n(V)\) consists only of the empty function \(\varnothing \to \mathbbm{F}_2\). For each \(f\in \mathcal{E}_n(V)\) the map \(\delta f\in\mathcal{E}_{n+1}(V)\) is given by
\[B\mapsto \sum_{v\in B} f(B\setminus \{v\}). \]
Clearly, it holds that \(\delta\circ \delta=0\), where \(0\) denotes the neutral element of the group \(\mathcal{E}_{n+2}(V)\). Two-graphs can be defined as follows.
\begin{definition}[\sffamily{two-graph}]
A two-graph is a tuple \(T=(V,\Delta)\), where \(V\) and \(\Delta\) are finite sets and there exists a map \(f_T\in \mathcal{E}_3(V)\) such that \(\delta f_T=0\) and \(\Delta=f_T^{-1}(1)\). 
The cardinality of \(V\) is called the order of \(T\). 
\end{definition}
Among other things, two-graphs naturally  occur in the study of systems of equiangular lines and 2-transitive permutation groups;
authoritative surveys are \cite{SEIDEL1991146, SEIDEL1992297}. Given a two-graph \(T=(V, \Delta)\), the following set is always non-empty: 
\[[T]:=\big\{f\colon E_2(V)\to \mathbb{F}_2 : \delta f=f_T\big\}.\]
Each \(f\in [T]\) gives rise to a graph \(G_f:=(V, f^{-1}(1))\). 
The \textit{Seidel adjacency matrix} of a graph \(G=(V,E)\)  is the matrix \(S(G)\), which is the symmetric \(|V|\times |V|\)-matrix given by
\[ S(G)_{ij}=
\begin{cases}
0 & \textrm{ if } i=j \\
-1  & \textrm{ if \(i\) and \(j\) are adjacent } \\
1  & \textrm{ otherwise. } \\
\end{cases}
\]

For each choice \(f_1,f_2\in [T]\) the matrices \(S(G_{f_1})\) and \(S(G_{f_2})\) have the same spectrum. 

By definition, the \textit{eigenvalues of \(T=(V, \Delta)\)} are the real numbers \[\lambda_1(T)\geq \ldots \geq \lambda_{\abs{V}}(T)\] that are the eigenvalues of \(S(G_{f})\) for \(f\in [T]\) (counted with multiplicity).
This definition is independent of \(f\in [T]\). 

We say that a two-graph \(T=(V,\Delta)\) is \(K_n\)-free if there is no injective map \(\phi\colon \{1, \ldots, n\} \to V\) such that 
\(\big\{\phi(v_1), \phi(v_2), \phi(v_3)\big\}\in \Delta\) for all distinct points \(v_1,v_2, v_3\in V_n\). 

\paragraph{\sffamily Main result}

Our main result reads as follows:
 \begin{theorem}\label{thm:finite}
If \(n\geq 1\) is an integer, then
\[\Pi_n=\sup_{d\geq 1} \,\max\left\{ \frac{n}{d}+\frac{1}{d} \sum_{k=1}^n \lambda_k(T) \, : \, T \textrm{ is a \(K_{n+2}\)-free two-graph of order \(d\)} \right\}. \]
\end{theorem}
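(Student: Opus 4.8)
The plan is to establish the two inequalities separately after passing to a finite combinatorial model. For the reduction I would use Theorem~\ref{thm:polyhedral} to realize $\Pi_n$ by a polyhedral maximizer $E$; such an $n$-dimensional $E$ embeds isometrically into $\ell_\infty^d$ via the extreme functionals of $B_{E^*}$, and for this embedding $\Pi(E)=\lambda(E,\ell_\infty^d)$, while conversely $\lambda(U,\ell_\infty^d)\le\Pi(U)\le\Pi_n$ for every $n$-dimensional $U\subseteq\ell_\infty^d$. Thus it suffices to show $\Pi_n=\sup\{\lambda(U,\ell_\infty^d):d\ge1,\ U\subseteq\ell_\infty^d,\ \dim U=n\}$, which I expect to be (essentially) the content of Theorem~\ref{thm:gamma}. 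Throughout I would work with the trace pairing $\langle A,B\rangle=\tr(A^{\top}B)$ on $d\times d$ matrices and with the identity $\norm{Q}_{\ell_\infty^d\to\ell_\infty^d}=\max_i\norm{\mathrm{row}_i(Q)}_1=\max\{\langle Q,e_is^{\top}\rangle:i\in\{1,\dots,d\},\ s\in\{-1,1\}^d\}$.

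For the inequality $\Pi_n\ge(\text{right-hand side})$ I would argue constructively; notably this direction does not use $K_{n+2}$-freeness. Given a two-graph $T$ of order $d$, pick $f\in[T]$, set $S:=S(G_f)$, and let $U\subseteq\ell_\infty^d$ be the span of eigenvectors of $S$ for the $n$ largest eigenvalues $\lambda_1(T)\ge\cdots\ge\lambda_n(T)$, with $P$ the orthogonal projection onto $U$. I would certify a lower bound for $\lambda(U,\ell_\infty^d)$ by the single dual matrix $W:=\tfrac1d(I+S)$: each row of $I+S$ lies in $\{-1,1\}^d$, so $W=\tfrac1d\sum_i e_i(\mathrm{row}_i(I+S))^{\top}$ is an average of the rank-one operators $x\mapsto\langle s,x\rangle e_i$, whence $\langle Q,W\rangle\le\norm{Q}$ for every projection $Q$ of $\ell_\infty^d$ onto $U$; and since $U$ is $(I+S)$-invariant, $\langle Q,W\rangle=\langle P,W\rangle=\tfrac1d\tr\bigl(P(I+S)\bigr)=\tfrac1d\bigl(n+\sum_{k=1}^n\lambda_k(T)\bigr)$ for every such $Q$. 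Hence $\Pi_n\ge\Pi(U)\ge\lambda(U,\ell_\infty^d)\ge\tfrac1d\bigl(n+\sum_{k=1}^n\lambda_k(T)\bigr)$, and taking the supremum over $K_{n+2}$-free $T$ and over $d$ finishes this half.

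For the reverse inequality I would dualize. Starting from a polyhedral maximizer $E\hookrightarrow\ell_\infty^d$ with minimal projection $Q_0$ (so $\norm{Q_0}=\Pi_n$), the dual of the convex program ``minimize $\norm{Q}$ over projections onto $U:=J(E)$'' yields an optimal $W^{*}$ in the convex hull of $\{e_is^{\top}:i\in\{1,\dots,d\},\ s\in\{-1,1\}^d\}$ with $\langle P,W^{*}\rangle=\Pi_n$ and $PW^{*}(I-P)=0$. Using the symmetry of the extremal problem (averaging over $\Aut(E)$, the duality relating projections onto $E$ and onto $E^{*}$, and $\tr Q_0=n>0$), I would bring $W^{*}$ into the form $\tfrac1d\Sigma$ with $\Sigma$ a symmetric $\pm1$ matrix having all diagonal entries $1$ — that is, $\Sigma=I+S(G)$ for a graph $G$ on $d$ vertices — and with $U$ being $\Sigma$-invariant. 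Then $\Pi_n=\langle P,W^{*}\rangle=\tfrac1d\tr(P\Sigma)\le\tfrac1d\sum_{k=1}^n\lambda_k(\Sigma)=\tfrac1d\bigl(n+\sum_{k=1}^n\lambda_k(T)\bigr)$, where $T$ is the two-graph with $G=G_f$, $f\in[T]$; so $\Pi_n\le\sup_d\max\{\ldots:T\text{ a two-graph of order }d\}$.

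I expect the main obstacle to be upgrading ``all two-graphs'' to ``$K_{n+2}$-free two-graphs'' in this last step, which should be exactly the role of Lemma~\ref{lem:evRed}. If $T$ contains a $K_{n+2}$ on an $(n+2)$-set $W$, then $\Sigma$ restricted to $W$ is switching-equivalent to $2I-J$, hence has the eigenvalue $2$ with multiplicity $n+1$, so by Cauchy interlacing $\lambda_{n+1}(I+S(G))\ge2$; one then has to trade away this positive spectral mass sitting outside the top $n$ eigenvalues — deleting or identifying suitable vertices of $W$ to produce a two-graph of smaller order with normalized value at least as large — a Tur\'an-type phenomenon that for $n=2$ is precisely what the Frankl--F\"uredi classification of $K_4$-free two-graphs makes explicit. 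The remaining (softer) difficulty is making the extreme-point reduction of the previous paragraph rigorous: $Q_0$ need not be a symmetric matrix, may have zero entries, and $\Aut(E)$ need not act transitively on coordinates, so ensuring the dual witness is genuinely a uniformly weighted symmetric $\pm1$ matrix with positive diagonal is where the precise statement of Theorem~\ref{thm:gamma} (together with Lemma~\ref{lem:super}, for the passage to the supremum over $d$ — note that blowing up a two-graph leaves both $K_{n+2}$-freeness and the quantity $\tfrac1d(n+\sum_{k=1}^n\lambda_k(T))$ unchanged, so no single order need be extremal) is presumably tailored to do the work.
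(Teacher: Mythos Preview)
Your lower-bound argument is correct and is essentially the easy direction implicit in Theorem~\ref{thm:known} with $D=\tfrac1d\mathbbm{1}_d$. The upper bound, however, has a genuine gap exactly where you flag it: the step ``bring $W^{*}$ into the form $\tfrac1d\Sigma$ with $\Sigma$ symmetric, $\pm1$-valued, unit diagonal'' cannot be carried out by averaging over $\Aut(E)$, because a polyhedral maximizer need not have any symmetry whatsoever --- its isometry group can be trivial, and even when it is not, there is no reason for the induced action on the coordinates of an $\ell_\infty^d$-embedding to be transitive. The labels \texttt{thm:gamma}, \texttt{lem:super}, \texttt{lem:evRed} you invoke for this step do not correspond to any statement actually present in the paper.

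The paper's route is different and does not redo the duality. It takes the Chalmers--Lewicki formula (Theorem~\ref{thm:known}) as a black box, which already delivers a symmetric $\{\pm1\}$-matrix $A\in\mathcal{A}_d$ but paired with an arbitrary weight $D\in\mathcal{D}_d$. The whole content of the upper bound is the reduction from arbitrary $D$ to $D=\tfrac1d\mathbbm{1}_d$, carried out not by symmetry but by a \emph{blow-up trick} (Lemma~\ref{lem:holyGrail}): approximate $D$ by a rational $\tfrac1m\Diag(n_1,\dots,n_d)$ and replace the $i$-th row and column of $A$ by $n_i$ identical copies to get $A'\in\mathcal{A}_m$; then $\lambda(AD)$ and $\lambda(\tfrac1m A')$ differ only by zeros, so $\sum_{k\le n}\lambda_k(AD)\le\tfrac1m\sum_{k\le n}\lambda_k(A')$. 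You mention in passing that blow-ups preserve the normalized quantity; the missing move is to run this observation backwards, using blow-ups to \emph{absorb} a non-uniform rational $D$ into a larger uniformly-weighted matrix. (Theorem~\ref{thm:polyhedral} plays no role in the paper's proof of Theorem~\ref{thm:finite}.)

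For $K_{n+2}$-freeness the paper also argues differently. At a fixed $d$, take $A\in\mathcal{A}_d$ maximizing $\sum_{k\le n}\lambda_k(A)$, write this as $\tr(AUU^t)$ with $U$ the matrix of top eigenvectors, and use the equality case of von Neumann's trace inequality to force $a_{ij}=\sgn\langle r_i,r_j\rangle$ for all $i,j$ (the $r_i$ being the rows of $U$). Then \cite[Lemma~2.1]{cia2013note} says a matrix with the sign pattern of a rank-$n$ Gram matrix cannot contain the Seidel matrix of $K_{n+2}$ as a principal submatrix. Your interlacing observation $\lambda_{n+1}(I+S)\ge2$ is correct but does not by itself produce a smaller two-graph with at least the same normalized value; the paper sidesteps this by showing the per-$d$ maximizer is already $K_{n+2}$-free.
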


To prove Theorem \ref{thm:finite}, we invoke a simple trick, see Lemma \ref{lem:holyGrail}, that allows us to greatly narrow
down the matrices that need to be considered. This is done in Section \ref{sec:formula}.

\paragraph{\sffamily Relative projection constants}

The following question has first been systematically addressed by K\"onig, Lewis, and Lin in \cite{konig1983finite}:

\begin{question}\label{quest:gruen}
Let \(n, d\geq 0\) be integers. What is 
\[\Pi(n,d):=\sup\big\{ \Pi(E) : E\subset \ell_d^\infty \textrm{ is an } n\textrm{-dimensional Banach space}\big\}?\]
\end{question} 
By definition, \(\sup \varnothing =-\infty\). Clearly, \(\Pi(d,d)=1\) and it is a direct consequence of the classical Hahn-Banach theorem that \(\Pi(1,d)=1\) for all integers \(d\geq 1\).
The quantity \(\Pi(d-1,d)\) has been examined by Bohnenblust, cf. \cite{bohnenblust1938convex}, where it is shown that \(\Pi(d-1,d)\leq 2 -\frac{2}{d}\). 
In \cite{CHALMERS2009553}, Chalmers and Lewicki determined the exact value of \(\Pi(3,5)\). In \cite{konig1983finite}, König, Lewis, and Lin established the general upper bound
\[\Pi(n,d)\leq  \frac{n}{d}+\sqrt{ \big(d-1\big)\frac{n}{d}\big(1-\frac{n}{d}\big)}\]
with equality if and only if \(\R^n\) admits a system of \(d\) distinct equiangular lines.
Thereby, as \(\R^3\) admits a system of six equiangular lines, cf. \cite[p. 496]{LEMMENS1973494}, it holds that
\[\Pi(3,6)=\frac{1+\sqrt{5}}{2}.\]
In light of
\[\Pi(4,6)=\frac{5}{3},\]
which we demonstrate in Paragraph \ref{para:graph}, up to \(d=6\) all exact values of \(\Pi(n ,d)\) for \(1\leq n \leq d\) are now computed.  It is well-known that 
\[\Pi(n,d) \leq \Pi(n,d+1) \,\textrm{  and  } \,\Pi(n,d)\leq \Pi(n+1,d+1)\]
 for all \(1\leq n \leq d\), cf. \cite{CHALMERS2009553}. Via Theorem \ref{thm:finite},
we infer the following asymptotic relation between these two increasing sequences: 
\begin{corollary}\label{cor:translation}
For each integer \(n\geq 1\) we have
\[1+\Pi_n=\lim_{d\to+\infty} \Pi\left(d-n,d\right).\]
\end{corollary}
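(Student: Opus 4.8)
The plan is to prove \(\limsup_d\Pi(d-n,d)\le 1+\Pi_n\) and \(\liminf_d\Pi(d-n,d)\ge 1+\Pi_n\); that the limit exists is automatic from \(\Pi(n,d)\le\Pi(n+1,d+1)\) together with the bound \(\Pi(d-n,d)\le 1+\Pi_n\) proved first. That bound is soft and uses only elementary functional analysis. Let \(E\subset\ell_\infty^d\) have codimension \(n\). Since \(\ell_\infty^d\) is \(1\)-injective, \(\Pi(E)\) is the relative projection constant of \(E\) in \(\ell_\infty^d\). Given \(\varepsilon>0\) I would choose a projection \(R\) of \(\ell_1^d=(\ell_\infty^d)^*\) onto the \(n\)-dimensional annihilator \(E^\perp\) with \(\|R\|\le\Pi(E^\perp)+\varepsilon\) — possible because the relative projection constant of a subspace never exceeds its absolute projection constant. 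Then \(R^*\) is a projection of \(\ell_\infty^d\) with kernel \((E^\perp)^\perp=E\), so \(\id-R^*\) is a projection of \(\ell_\infty^d\) onto \(E\) of norm \(\le 1+\|R\|\le 1+\Pi(E^\perp)+\varepsilon\le 1+\Pi_n+\varepsilon\). Letting \(\varepsilon\downarrow0\) gives \(\Pi(E)\le 1+\Pi_n\), and hence \(\Pi(d-n,d)\le1+\Pi_n\) for every \(d\).

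For the reverse inequality I would use the finite-level identity underlying Theorem~\ref{thm:finite} — the statement actually proved in Section~\ref{sec:formula} — that for \(1\le m\le d\),
\[\Pi(m,d)=\max\Big\{\tfrac{m}{d}+\tfrac1d\textstyle\sum_{k=1}^{m}\lambda_k(T)\ :\ T\text{ a }K_{m+2}\text{-free two-graph of order }d\Big\}.\]
Fix \(\varepsilon>0\). By Theorem~\ref{thm:finite} I would fix a \(K_{n+2}\)-free two-graph \(T_0\) of order \(d_0\ge 2\) with \(v_0:=\tfrac{n}{d_0}+\tfrac1{d_0}\sum_{k=1}^n\lambda_k(T_0)>\Pi_n-\varepsilon\) (when \(n=1\) take for \(T_0\) the edgeless two-graph on two vertices, for which \(v_0=1=\Pi_1\)), and a representative \(G_0\in[T_0]\) that is not edgeless, so that \(\alpha(G_0)\le d_0-1\). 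For an even integer \(t\) let \(G_0[t]\) be the \emph{matched blow-up} of \(G_0\): each vertex becomes a cluster of \(t\) vertices carrying an internal perfect matching, and two clusters are joined completely or not at all according to the adjacency of the corresponding vertices in \(G_0\). Put \(d=td_0\) and let \(T^\sharp\) be the two-graph with \(\overline{G_0[t]}\in[T^\sharp]\); this is the candidate witness for a large value of \(\Pi(d-n,d)\).

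The work is in two checks. \emph{Freeness.} Unwinding the definition of a two-graph via \(\delta\), for any graph \(H\) the two-graph having \(H\) in its switching class fails to be \(K_r\)-free exactly when \(H\) has an induced subgraph on \(r\) vertices that is a disjoint union of at most two cliques; taking \(H=\overline{G_0[t]}\), this says \(T^\sharp\) fails to be \(K_{d-n+2}\)-free exactly when \(G_0[t]\) has an induced complete bipartite subgraph on \(d-n+2\) vertices. Both colour classes of such a subgraph are independent sets of \(G_0[t]\), and the matchings inside the clusters force \(\alpha(G_0[t])\le\tfrac t2\,\alpha(G_0)\le\tfrac t2(d_0-1)\), so the subgraph has at most \(t(d_0-1)=d-t<d-n+2\) vertices once \(t>n-2\) — a contradiction, so \(T^\sharp\) is \(K_{d-n+2}\)-free. \emph{Spectrum.} Writing \(M_t\) for the perfect matching on \(t\) vertices, \(S(G_0[t])=S(G_0)\otimes J_t+I_{d_0}\otimes S(M_t)\) with \(J_t\) and \(S(M_t)\) commuting; a direct computation gives that the eigenvalues of \(T^\sharp\) are \(3-t(\lambda_k(T_0)+1)\) for \(k=1,\dots,d_0\), together with \(3\) and \(-1\) with multiplicities \(d_0(\tfrac t2-1)\) and \(d_0\tfrac t2\). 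Summing the \(d-n\) largest of these and using \(\sum_k\lambda_k(T^\sharp)=0\) yields \(\sum_{k=1}^{d-n}\lambda_k(T^\sharp)\ge t\big(n+\sum_{k=1}^n\lambda_k(T_0)\big)-3n=td_0v_0-3n\). Hence, by the finite-level identity at \(m=d-n\),
\[\Pi(d-n,d)\ \ge\ \frac{d-n}{d}+\frac1d\sum_{k=1}^{d-n}\lambda_k(T^\sharp)\ \ge\ 1+v_0-\frac{4n}{td_0}\,,\]
and letting \(t\to+\infty\) and then \(\varepsilon\downarrow0\) gives \(\liminf_d\Pi(d-n,d)\ge 1+\Pi_n\), which with the upper bound finishes the proof.

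The delicate step is the freeness check. Enlarging a near-optimal configuration by adjoining ``generic'' vertices pushes the normalised eigenvalue sum toward \(1\); blowing up is the right way to increase \(d\) while preserving the value, but an ordinary blow-up by independent clusters has independence number roughly \(d/2\), and then \(T^\sharp\) is not \(K_{d-n+2}\)-free. Putting a perfect matching inside each cluster keeps \(\alpha(G_0[t])\le\tfrac t2\alpha(G_0)\), which is \(o(d)\), and this is exactly what is needed. Conceptually, the additive \(1\) in the statement comes from the complementation involution \(T\mapsto\overline T\) — it negates Seidel matrices, turning the sum of the \(n\) smallest eigenvalues into the sum of the \(n\) largest, i.e.\ the codimension-\(n\) problem into the dimension-\(n\) problem — together with the trace identity \(\sum_k\lambda_k=0\), which turns \(\tfrac{d-n}{d}-\tfrac{n}{d}\) into \(1\) in the limit.
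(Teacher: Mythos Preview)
Your upper bound argument via duality is correct and self-contained; the paper instead simply cites this inequality from \cite{foucart2017maximal}.

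The lower bound, however, rests on a false premise. The ``finite-level identity'' you invoke,
\[\Pi(m,d)=\max\Big\{\tfrac{m}{d}+\tfrac1d\textstyle\sum_{k=1}^m\lambda_k(T)\ :\ T\ \text{a }K_{m+2}\text{-free two-graph of order }d\Big\},\]
is \emph{not} what Section~\ref{sec:formula} proves, and it is in fact wrong: for $m=2$, $d=4$ the left-hand side is $\Pi(2,4)=\tfrac43$, while the right-hand side equals $\max_{A\in\mathcal A_4}\tfrac14\pi_2(A)$ (the paper shows the maximiser over $\mathcal A_d$ is automatically $K_{m+2}$-free), and this cannot be $\tfrac43$ since $\pi_2(A)$ is a sum of two eigenvalues of an integer matrix, hence an algebraic integer, and $\tfrac{16}{3}$ is not. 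What Section~\ref{sec:formula} actually establishes is that the \emph{supremum over $d$} of the right-hand side equals $\Pi_m$; at each fixed $d$ one only has the inequality
\[\Pi(m,d)\ \ge\ \tfrac1d\,\pi_m(A)\qquad\text{for every }A\in\mathcal A_d,\]
which is just Theorem~\ref{thm:known} with $D=\tfrac1d\,\mathbbm 1_d$.

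That inequality is all you actually use, and it holds for \emph{every} $A\in\mathcal A_d$ with no freeness hypothesis. So your entire $K_{d-n+2}$-freeness check is superfluous, and with it the need for the matched blow-up: the ordinary blow-up (the paper's Kronecker product $A\otimes J_2$) already gives $\tfrac1{2d}\pi_n(A\otimes J_2)=\tfrac1d\pi_n(A)$, which is how the paper sends $d_\ell\to\infty$. After complementing (your $T\mapsto\overline T$, the paper's $A\mapsto\overline A=2\,\mathbbm 1-A$), the trace identity and Theorem~\ref{thm:known} give $\Pi(d_\ell-n,d_\ell)\ge\tfrac{2n}{d_\ell}-1+\tfrac1{d_\ell}\pi_n(A_\ell)$ in one line. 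Your spectrum computation for $T^\sharp$ is correct, but the construction is a detour around what is, in the paper, a two-line complementation argument.
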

A proof of Corollary \ref{cor:translation} is given in Section \ref{sec:formula}. If \(n=1\), then
Corollary \ref{cor:translation} follows directly from the fact that Bohnenblust's upper bound of \(\Pi(d-1,d)\) is sharp, cf. \cite[Lemma 2.6]{CHALMERS2009553}. 
Recently, the special case \(n=2\) has been considered by Sokołowski in \cite{doi:10.1080/01630563.2016.1266655}.

Recall that \(\Pi(1,d)=\Pi(1,1)=1\) for all \(d\geq 1\).
The proof of  Gr\"unbaum's conjecture, cf. \cite{BruceL2010}, shows that \[\Pi(2,d)=\Pi(2,3)=\frac{4}{3}\,\,\,\, \textrm{ for all } d\geq 3.\]
Numerical experiments, cf. \cite[Appendix B]{foucart2017maximal}, suggest that if \(d\in \{6, \ldots, 10\}\), then \(\Pi(3,d)=\Pi(3,6)\). 
Since \(\Pi_n(\cdot)\) admits a polyhedral maximizer, the sequence \(\Pi(n, \cdot)\) stabilizes eventually.

\begin{theorem}\label{thm:polyhedral}
Let \(n\geq 1\) be an integer. There exists a polyhedral \(n\)-dimensional Banach space \((F_n, \norm{\cdot})\) such that 
\[\Pi(F_n)=\Pi_n.\]
As a result, there is an integer \(D\geq 1\) such that
\[\Pi(n,d)=\Pi(n,D)\]
for all \(d\geq D\).
\end{theorem}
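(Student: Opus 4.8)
The plan is to produce a polyhedral maximizer by reading off the structure of a minimal projection onto an arbitrary maximizer, Klee's Proposition~4.7 on polyhedral convex bodies serving as the external geometric input.

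First I would fix a maximizer and a minimal projection. Because \((\mathsf{Ban}_n,d_{\mathrm{BM}})\) is compact and \(\log\circ\,\Pi\) is \(1\)-Lipschitz, hence continuous, the supremum defining \(\Pi_n\) is attained: fix \(E\in\mathsf{Ban}_n\) with \(\Pi(E)=\Pi_n\) and embed it isometrically into \(C(K)\), where \(K=B_{E^*}\) carries the weak-\(*\) topology, via \(e\mapsto(\gamma\mapsto\gamma(e))\). A weak-\(*\) compactness argument produces a minimal projection \(P\colon C(K)\to E\) with \(\|P\|=\Pi_n\). Writing \(Pf=\sum_{j=1}^{n}\big(\int_K f\,d\mu_j\big)e_j\) for a basis \((e_j)\) of \(E\) and signed Radon measures \(\mu_j\in M(K)\), the projection property reads \(\int_K e_k\,d\mu_j=\delta_{jk}\) for all \(j,k\), and \(\|P\|=\sup_{\phi\in B_{E^*}}\big\|\sum_{j}\phi(e_j)\mu_j\big\|_{M(K)}\).

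Next I would cut this down to finitely many functionals. Combining minimality of \(P\) with the fact that \(\Pi(E)\) is \emph{maximal} over all \(n\)-dimensional spaces should yield an optimality condition of Chalmers--Metcalf / Lagrange-multiplier type: a finite set \(\{\gamma_1,\dots,\gamma_r\}\subset K\) (with \(r\) bounded in terms of \(n\), by a Carath\'eodory argument on the finite-dimensional set of admissible moment data), weights \(c_i>0\), and signs \(\varepsilon_i\in\{\pm1\}\), that jointly certify the value \(\|P\|=\Pi_n\). Replacing each \(\mu_j\) by a measure supported on \(\{\gamma_1,\dots,\gamma_r\}\) with the same moments produces a projection \(P_0\) with \(\|P_0\|=\Pi_n\) of the form \(P_0=\bar P\circ\rho\), where \(\rho\colon C(K)\to\ell_\infty^{r}\) is evaluation at \(\gamma_1,\dots,\gamma_r\) and \(\bar P\colon\ell_\infty^{r}\to E\) is the induced map. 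Put \(F_n:=\rho(E)\subset\ell_\infty^{r}\). Since \(\bar P\circ(\rho|_E)=P_0|_E=\id_E\), the restriction \(\rho|_E\) is a linear isomorphism of \(E\) onto \(F_n\), so \(F_n\) is an \(n\)-dimensional polyhedral space, and trivially \(\Pi(F_n)\le\Pi_n\). For the reverse inequality \(\Pi(F_n)\ge\Pi_n\) I would transport the certifying data through \(\rho|_E\): the \(\gamma_i\), read off as coordinate functionals of \(\ell_\infty^{r}\) restricted to \(F_n\), together with the \(c_i\) and \(\varepsilon_i\), should still satisfy the Chalmers--Metcalf optimality relations on \(F_n\subset\ell_\infty^{r}\), so that every projection \(\ell_\infty^{r}\to F_n\) has norm \(\ge\Pi_n\); as \(\ell_\infty^{r}\) is \(1\)-injective this gives \(\Pi(F_n)=\Pi_n\), which is the first assertion. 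The second then follows at once: embedding \(F_n\) isometrically in \(\ell_\infty^{D}\) yields \(\Pi(n,D)\ge\Pi(F_n)=\Pi_n\ge\Pi(n,d)\) for every \(d\), so, by monotonicity of \(d\mapsto\Pi(n,d)\), one gets \(\Pi(n,d)=\Pi(n,D)=\Pi_n\) for all \(d\ge D\).

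The decisive obstacle is the exactness of this cutting-down, i.e.\ the inequality \(\Pi(F_n)\ge\Pi_n\), and it is here that Klee's Proposition~4.7 is needed. The restriction \(\rho|_E\) need not be isometric --- the \(\gamma_i\) need not form a norming set for \(E\), and they cannot if \(E\) is not already polyhedral --- so a priori \(F_n\) is only Banach--Mazur-\emph{close} to \(E\); since \(\Pi\) is merely Banach--Mazur-continuous and not locally constant, one must genuinely exploit the rigidity of a minimal projection onto a \emph{maximizer}, together with Klee's structural description of polyhedral convex bodies, to guarantee that the functionals \(\gamma_i\) retain their extremal role on \(F_n\) and that the certificate transfers \emph{exactly} rather than only up to \(\varepsilon\). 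Everything else --- compactness for the maximizer, weak-\(*\) compactness for the minimal projection, the Carath\'eodory reduction, and the monotonicity argument for \(\Pi(n,d)\) --- is soft.
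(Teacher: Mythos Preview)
Your proposal has a genuine gap, and it stems from a misidentification of what Klee's Proposition~4.7 actually says. That proposition is not a ``structural description of polyhedral convex bodies'' that could rescue a rigidity argument; it is the concrete statement that \emph{every finite-dimensional linear subspace of \(c_0\) is polyhedral}. It therefore cannot help you show that your Chalmers--Metcalf certificate transfers exactly through the non-isometric map \(\rho|_E\). You correctly isolate the obstacle --- proving \(\Pi(F_n)\ge\Pi_n\) when \(F_n=\rho(E)\) is only Banach--Mazur close to \(E\) --- but the tool you invoke to overcome it is simply not applicable, and nothing else in your outline closes the gap. The phrases ``should yield'' and ``should still satisfy'' are doing real work that is never cashed out; in particular, the Chalmers--Metcalf relations are tied to the norm of \(E\), and once you change the norm (as you must, if \(E\) is not already polyhedral) there is no mechanism in your argument forcing the same finite data to certify a lower bound for projections onto \(F_n\).

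The paper's route is entirely different and avoids this difficulty. One takes a maximizer \(F\subset\ell_\infty\), uses the identification \(\ell_\infty=(c_0)^{\ast\ast}\), and proves a duality lemma: for a finite-dimensional \(F\subset X^{\ast\ast}\) there is a subspace \(V\subset X\) of the same dimension with \(\Pi(F,X^{\ast\ast})=\Pi(V,X)\). Concretely \(V=(F_0)_0\), and the equality of relative projection constants follows from the elementary identity \(\lVert P_V^U\rVert=\lVert P_{U^0}^{V^0}\rVert\) applied twice (once in each direction) together with the correspondence between complements in \(X\) and in \(X^{\ast\ast}\) via annihilators. This gives an \(n\)-dimensional \(V\subset c_0\) with \(\Pi_n=\Pi(F,\ell_\infty)=\Pi(V,c_0)\le\Pi(V)\le\Pi_n\), and \emph{now} Klee's result applies: \(V\subset c_0\) is automatically polyhedral. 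No Chalmers--Metcalf theory, no Carath\'eodory reduction, and no rigidity argument are needed; the whole point is that duality lets you replace the ambient space \(\ell_\infty\) by \(c_0\) without losing anything, and polyhedrality then comes for free.
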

A proof of Theorem \ref{thm:polyhedral} can be found in Section \ref{sec:polyball}.


\section{A formula for \(\Pi_n\)}\label{sec:formula}
\paragraph{\sffamily A result of Chalmers and Lewicki}
Let \(d\geq 1\) be an integer and set
\[\mathcal{A}_d:=\big\{ \mathbbm{1}_d+S : S \textrm{ is a Seidel adjacency matrix of a simple graph of order } d \big\}.\]
Moreover, we use \(\mathcal{D}_d\) to denote the set of all diagonal \(d\times d\)-matrices  that have trace equal to one and whose diagonal entries are non-negative. 

For \(A\in \mathcal{A}_d\) and \(D\in \mathcal{D}_d\) we write \(\lambda_1(\sqrt{D} A \sqrt{D})\geq \ldots \geq \lambda_d(\sqrt{D} A \sqrt{D})\) for the eigenvalues of the symmetric matrix \(\sqrt{D} A \sqrt{D}\) (counted with multiplicity). 
The subsequent result, due to Chalmers and Lewicki, characterizes the values \(\Pi(n,d)\) in terms of maximal sums of eigenvalues of matrices of the form \(\sqrt{D}A\sqrt{D}\).

\begin{theorem}[Theorem 2.3 in \cite{BruceL2010}]\label{thm:known}
Let \(1\leq n \leq d\) be integers.  The value \(\Pi(n,d)\) is attained and equals
\begin{equation*}
\begin{split}
&\max\left\{ \sum_{k=1}^n \lambda_k\left(\sqrt{D}A\sqrt{D}\right) : A\in \mathcal{A}_d  \textrm{ \normalfont and } D\in \mathcal{D}_d  \right\}.
\end{split}
\end{equation*}
\end{theorem}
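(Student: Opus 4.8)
The plan is to recast \(\Pi(E)\) as a finite-dimensional convex programme, dualise it, and read off the value of the dual as a maximal sum of eigenvalues.

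\emph{Step 1 (finite-dimensionalisation).} Fix an \(n\)-dimensional \(E\subseteq\ell_\infty^d\) and a provisional Euclidean structure on \(E\cong\R^n\); let \(u_1,\dots,u_d\in\R^n\) be the Riesz representatives of the coordinate functionals \(\delta_i|_E\), so that \(\norm{x}_E=\max_i\abs{\langle u_i,x\rangle}\). Since \(\ell_\infty^d\) is \(1\)-injective, \(\Pi(E)\) is the least norm of a projection \(P\colon\ell_\infty^d\to E\); writing \(Px=\sum_i x_i v_i\) with \(v_i\in E\), such a \(P\) is a projection onto \(E\) exactly when \(\sum_i u_i\otimes v_i=\Id_E\), and then \(\norm P=\max_j\sum_i\abs{\langle u_j,v_i\rangle}\). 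Hence
\[\Pi(E)=\min\Bigl\{\,\max_{1\le j\le d}\ \sum_{i=1}^d\abs{\langle u_j,v_i\rangle}\ :\ \textstyle\sum_i u_i\otimes v_i=\Id_E\,\Bigr\},\]
a minimum (attained, as the \(u_j\) span) of a polyhedral norm over an affine subspace, and \(\Pi(n,d)\) is the supremum of the right-hand side over all spanning tuples \((u_1,\dots,u_d)\), since each realises an \(n\)-dimensional subspace of \(\ell_\infty^d\) and conversely.

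\emph{Step 2 (duality and the inequality \(\ge\)).} By convex duality (strong duality holds: the objective is polyhedral and the constraint affine),
\[\Pi(E)=\sup\Bigl\{\,\operatorname{tr}\Theta\ :\ \Theta\in\R^{n\times n},\ \Theta u_i=\textstyle\sum_j p_j\sigma_{ji}u_j\ \ (1\le i\le d)\,\Bigr\},\]
the supremum over matrices \(\Theta\), probability vectors \((p_1,\dots,p_d)\) and \(\sigma\in[-1,1]^{d\times d}\); this is because the linear functional \(v\mapsto\sum_i\langle\Theta u_i,v_i\rangle\) is dominated by \(v\mapsto\max_j\sum_i\abs{\langle u_j,v_i\rangle}=\max_{j,\varepsilon}\sum_i\varepsilon_i\langle u_j,v_i\rangle\) precisely when \((\Theta u_i)_i\) lies in the convex hull of the tuples \((\varepsilon_i u_j)_i\). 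From this, "\(\ge\)" is immediate: given \(A\in\mathcal A_d\) and \(D=\operatorname{diag}(d_1,\dots,d_d)\in\mathcal D_d\) (all \(d_i>0\), without loss of generality), put \(M:=\sqrt{D}A\sqrt{D}\), pick \(W\in\R^{d\times n}\) with orthonormal columns spanning the top-\(n\) eigenspace \(F\) of \(M\), set \(r_i:=W^{\top}e_i\) (so \(\sum_i r_ir_i^{\top}=\Id_n\)), and let \(E:=(\R^n,\ x\mapsto\max_i d_i^{-1/2}\abs{\langle r_i,x\rangle})\subseteq\ell_\infty^d\), whose coordinate representatives are \(u_i=d_i^{-1/2}r_i\). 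Since \(F\) is \(M\)-invariant one computes that \(\Theta:=W^{\top}MW=\operatorname{diag}(\lambda_1(M),\dots,\lambda_n(M))\) satisfies \(\Theta u_i=\sum_j d_j A_{ij}u_j\), so \((\Theta,\ p=(d_i)_i,\ \sigma=A)\) is dual-feasible and
\[\Pi(n,d)\ \ge\ \Pi(E)\ \ge\ \operatorname{tr}\Theta\ =\ \sum_{k=1}^n\lambda_k\bigl(\sqrt{D}A\sqrt{D}\bigr).\]

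\emph{Step 3 (the inequality \(\le\), and the main obstacle).} For "\(\le\)" one must produce, for each \(E\), some \(A\in\mathcal A_d\) and \(D\in\mathcal D_d\) with \(\Pi(E)\le\sum_{k=1}^n\lambda_k(\sqrt{D}A\sqrt{D})\). Complementary slackness between the optimal \(\Theta\) and an optimal dual frame \((v_i^{\ast})\) forces \(\sigma_{ij}=\operatorname{sign}\langle u_i,v_j^{\ast}\rangle\), which a priori is neither symmetric nor \(\pm1\)-valued. The heart of the proof is a structural lemma — this is where the Chalmers--Metcalf theory of minimal projections enters — that the minimal projection is, after passing to a suitable Euclidean structure, \emph{frame-adapted}: there are an inner product on \(E\) and a probability vector \((q_i)\) with \(v_i^{\ast}=q_iu_i\), with \(\{\sqrt{q_i}\,u_i\}\) a Parseval frame (\(\sum_i q_iu_iu_i^{\top}=\Id_n\)), and with \(\sum_i q_i\abs{\langle u_i,u_j\rangle}=\Pi(E)\) for every \(j\) with \(q_j>0\); here one uses that the associated Chalmers--Metcalf operator is positive semidefinite with trace \(\Pi(E)\). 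Granting this, set \(D:=\operatorname{diag}(q_i)\in\mathcal D_d\), \(A_{ij}:=\operatorname{sign}\langle u_i,u_j\rangle\in\mathcal A_d\), and let \(R\) be the Gram matrix of \(\{\sqrt{q_i}\,u_i\}\), an orthogonal projection of rank \(n\) with \(\operatorname{tr}R=n\). Then, by the Ky Fan variational principle for partial eigenvalue sums,
\[\sum_{k=1}^n\lambda_k\bigl(\sqrt{D}A\sqrt{D}\bigr)\ \ge\ \operatorname{tr}\bigl(\sqrt{D}A\sqrt{D}\,R\bigr)\ =\ \sum_{i,j}q_iq_j\abs{\langle u_i,u_j\rangle}\ =\ \sum_j q_j\sum_i q_i\abs{\langle u_i,u_j\rangle}\ =\ \Pi(E),\]
the \(j\) with \(q_j=0\) dropping out. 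This finishes "\(\le\)"; together with Step 2 the common value is \(\max_{A\in\mathcal A_d,\,D\in\mathcal D_d}\sum_{k=1}^n\lambda_k(\sqrt{D}A\sqrt{D})\), attained both as a maximum (\(\mathcal A_d\) is finite, \(\mathcal D_d\) compact) and by the subspace of Step 2 built from an extremal pair. The one genuinely hard ingredient is the frame-adaptedness lemma, i.e.\ extracting from the optimality conditions for the minimal projection a Euclidean structure turning the optimal dual frame into a non-negative rescaling of the coordinate frame; the rest is bookkeeping with Fenchel duality and the Ky Fan principle.
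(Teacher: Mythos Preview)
The paper does not supply a proof of this statement: Theorem~\ref{thm:known} is quoted verbatim as ``Theorem 2.3 in \cite{BruceL2010}'' and used as a black box. So there is no ``paper's own proof'' to compare against; what you have written is effectively a sketch of the Chalmers--Lewicki argument itself.

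As a sketch it is broadly faithful to that source. Your Step~1 is the standard reduction, and your Step~2 is correct: the verification that \(\Theta u_i=\sum_j d_j A_{ij}u_j\) goes through because \(\Theta W^{\top}=W^{\top}M\) on the invariant subspace, and the Ky Fan lower bound in Step~3 is the right endgame. You are also honest that the whole weight of the argument rests on the ``frame-adaptedness lemma'' --- that the minimal projection can be taken of the form \(v_i^{\ast}=q_i u_i\) with \(\sum_i q_i u_iu_i^{\top}=\Id_n\) after a suitable choice of Euclidean structure on \(E\). That lemma is exactly the Chalmers--Metcalf machinery (the extremal measure and the positivity/trace properties of the associated operator), and it is not something one can wave through; in \cite{BruceL2010} it occupies the bulk of the argument leading to their Theorem~2.3. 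So your write-up is an accurate roadmap, but it is not a self-contained proof: the step you flag as ``genuinely hard'' is the theorem.

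One small technical point: in Step~3 you set \(A_{ij}:=\operatorname{sign}\langle u_i,u_j\rangle\) and assert \(A\in\mathcal A_d\). If some off-diagonal \(\langle u_i,u_j\rangle\) vanishes this gives \(A_{ij}=0\notin\{\pm1\}\); you need to argue (as Chalmers--Lewicki do, and as the present paper does in the proof of Theorem~\ref{thm:finite} via a separate perturbation argument) that at an extremiser these inner products are nonzero, or else absorb the zero entries by an arbitrary choice of sign, which does not affect the trace computation since the corresponding summands vanish anyway.
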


\paragraph{\sffamily Blow-up of matrices}
Let \(i\geq 1\) be an integer and consider the map
\[\bl_i \colon \bigcup\limits_{d\geq i} \mathcal{A}_d \to \bigcup\limits_{d\geq i+1} \mathcal{A}_d,  \quad
A \mapsto \bl_i(A):=
\left[
\begin{array}{cc}
A & a_i^t \\

a_i & 1,
\end{array}
\right]
\]
where \(a_i\) denotes the \(i\)-th row of \(A\). By construction, the \(i\)-th row of \(\bl_i(A)\) and the last row of \(\bl_i(A)\) coincide.
We say that the matrix \(\bl_i(A)\) is a \textit{blow-up} of \(A\) (with respect to the \(i\)-th row).

If \(A\in \mathcal{A}_d\) is a matrix and \(D\in\mathcal{D}_d\) is positive-definite, then all eigenvalues of \(AD\) are real, for \(AD\) is equivalent to the symmetric matrix \(\sqrt{D}A\sqrt{D}\). With a similar argument, one can show that even if \(D\) is positive-semidefinite, then all eigenvalues of \(AD\) are real. We use the notation \[\lambda(AD):=(\lambda_1(AD), \ldots, \lambda_d(AD)),\]
where \(\lambda_1(AD)\geq \ldots \geq \lambda_d(AD)\) are the eigenvalues of \(AD\) (counted with multiplicity). 
The lemma below is the key step in the proof of Theorem \ref{thm:finite}. 

\begin{lemma}\label{lem:holyGrail}
Let \(A^\prime\in \mathcal{A}_{d-1}\) be a matrix, let \(A:=\normalfont{\bl}_i(A^\prime)\) for some integer \(1\leq i \leq d-1\) and let \(D:=\Diag(d_1, \ldots, d_d)\in\mathcal{D}_d\) be an invertible matrix. We set \(D^\prime:=\Diag(d_1, \ldots, d_{i-1}, d_{i}+d_{d}, d_{i+1}, \ldots, d_{d-1})\). Then \(D^\prime \in \mathcal{D}_{d-1}\) is invertible, \(\lambda(AD)\) has a zero entry and 
\[\lambda(A^\prime D^\prime) \textrm{ is obtained from } \lambda(AD)  \textrm{ by deleting a zero entry }.\]
\end{lemma}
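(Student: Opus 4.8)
The plan is to produce an $AD$-invariant splitting $\R^d=H\oplus L$, where $L$ is a line contained in $\ker(AD)$ and $H$ is a hyperplane on which $AD$ is conjugate to $A'D'$; comparing characteristic polynomials across this splitting then yields the statement.

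\emph{Bookkeeping.} The diagonal entries of $D'$ are the $d_j$ with $j\notin\{i,d\}$ together with $d_i+d_d$, all strictly positive, and $\tr D'=\sum_{j=1}^{d-1}d_j+d_d=\tr D=1$, so $D'\in\mathcal{D}_{d-1}$ is invertible. I also record the shape of $A=\bl_i(A')$: with $e_1,\dots,e_d$ the standard basis of $\R^d$, the top-left $(d-1)\times(d-1)$ block of $A$ is $A'$, the last row and the last column of $A$ repeat the $i$-th row and $i$-th column of $A'$, and $A_{dd}=1$. Using the symmetry of $A'$ and $A'_{ii}=1$ this says precisely that the $i$-th and $d$-th columns of $A$ coincide, i.e. $Ae_i=Ae_d$, and likewise for the rows.

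\emph{The zero eigenvalue and the splitting.} From $Ae_i=Ae_d$ we get $AD(d_de_i-d_ie_d)=d_id_d\,A(e_i-e_d)=0$; since $d_i,d_d>0$ the vector $v_0:=d_de_i-d_ie_d$ is nonzero, so $0$ is an eigenvalue of $AD$ and $\lambda(AD)$ has a zero entry. Put $L:=\R v_0$ and $H:=\{x\in\R^d:x_i=x_d\}$; the $i$-th and $d$-th coordinates of $v_0$ are $d_d>0$ and $-d_i<0$, hence $v_0\notin H$ and $\R^d=H\oplus L$. Now I introduce the linear map $\phi\colon\R^{d-1}\to H$ sending the $j$-th standard basis vector to $e_j$ for $j\neq i$ and the $i$-th one to $e_i+e_d$; it is an isomorphism onto $H$. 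The single computational step is to verify the intertwining identity $AD\circ\phi=\phi\circ A'D'$ by evaluating both sides on each basis vector: for an index $k\neq i$ one uses $A_{jk}=A'_{jk}$, $A_{dk}=A'_{ik}$ and $d'_k=d_k$; for $k=i$ one uses $Ae_i=Ae_d$, $d'_i=d_i+d_d$ and $A'_{ii}=1$. Granting this, $AD(H)=\phi\bigl(A'D'(\R^{d-1})\bigr)\subseteq H$, so $H$ is $AD$-invariant and $AD|_H$ is similar to $A'D'$ through $\phi$.

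\emph{Conclusion.} Both $L$ and $H$ are $AD$-invariant, so $AD$ is block-diagonal with respect to $\R^d=H\oplus L$, with blocks $AD|_H$ and $AD|_L=0$; hence the multiset of eigenvalues of $AD$ is that of $AD|_H$ together with one additional $0$. Since $AD|_H$ and $A'D'$ are similar they have the same eigenvalue multiset, and since $AD$ and $A'D'$ are similar to the symmetric matrices $\sqrt{D}A\sqrt{D}$ and $\sqrt{D'}A'\sqrt{D'}$ their spectra are real; therefore the decreasing eigenvalue list $\lambda(A'D')$ is obtained from $\lambda(AD)$ by deleting one zero entry, which is the assertion. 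The only part that is not purely formal is the intertwining identity $AD\circ\phi=\phi\circ A'D'$, a short but slightly delicate computation owing to the duplicated $i$-th/$d$-th rows and columns and the merged weight $d_i+d_d$.
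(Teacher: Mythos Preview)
Your proof is correct. Both you and the paper use the same linear embedding $\phi\colon\R^{d-1}\to\R^d$, $x'\mapsto(x_1,\dots,x_{d-1},x_i)$; the paper checks that $\phi$ carries each eigenvector of $A'D'$ to an eigenvector of $AD$ with the same eigenvalue, which is the pointwise consequence of your intertwining identity $AD\circ\phi=\phi\circ A'D'$. The difference is in how the argument is closed off. You complete $\phi(\R^{d-1})=H$ by an explicit kernel line $L=\R(d_de_i-d_ie_d)$ to an $AD$-invariant splitting $\R^d=H\oplus L$, so the spectrum of $AD$ is literally that of $A'D'$ together with one extra $0$; this handles multiplicities and the location of the extra eigenvalue in one stroke. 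The paper instead argues separately that $\rk(AD)=\rk(A'D')$ via a principal-submatrix observation and Sylvester's law of inertia, and then combines rank equality with the eigenvector transport to reach the same conclusion. Your route is a bit more self-contained (no appeal to Sylvester, no implicit multiplicity bookkeeping); the paper's route has the mild advantage that it never needs to verify $AD$-invariance of $H$ or write down the complementary kernel vector.
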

\begin{proof}
For each integer \(1\leq k \leq d\) let \(s_k\) denote the \(k\)-th row of \(A\). By assumption, 
\[s_d=s_i.\]
Let \(\lambda\) be an eigenvalue of \(A^\prime D^\prime\) and let \(x^\prime:=(x_1, \ldots, x_{d-1})\in \R^{d-1}\) be a corresponding eigenvector.
We define \(x:=(x_1, \ldots, x_{d-1},  x_i)\).
For all \(1 \leq k <d\) we compute
\begin{equation}\label{eq:meat}
\begin{split}
&\langle  D s_k ,  x\rangle_{_{\R^d}}= s_{ki}d_i x_i+s_{kd}d_d  x_i+\sum_{\ell\neq d,i}^d s_{k\ell} d_\ell x_\ell \\
&=s_{ki}d_i x_i+s_{ki} d_d  x_i+\sum_{\ell\neq d,i}^d s_{k\ell} d_\ell x_\ell = \langle  D^\prime s_k^{\prime} ,  x^\prime \rangle_{_{\R^{d-1}}}.
\end{split}
\end{equation}
Thus, for all \(1 \leq k <d\) we have
\[\langle  D s_k ,  x\rangle_{_{\R^d}}=\langle  D^\prime s_k^{\prime} ,  x^\prime \rangle_{_{\R^{d-1}}}=\lambda x_{k}.\]
Furthermore,
\[\langle  D s_d ,  x\rangle_{_{\R^d}}= \langle  D s_i ,  x\rangle_{_{\R^d}}=\lambda  x_i;\]
as a result, the vector \(x\) is an eigenvector of \(AD\) with corresponding eigenvalue \(\lambda\). 

Next, we show that \(AD\) and \(A^\prime D^\prime\) have the same rank. There exists a principal submatrix \(T\) of \(A\) 
such that \(T\) is invertible and \(\rk(A)=\rk(T)\). This is well-known, cf. for example \cite[Theorem 5]{thompson1968principal}.
Clearly, \(T\) cannot be obtained from \(A\) by keeping the \(i\)-th and \(d\)-th column simultaneously; thus, \(T\) is also a principal submatrix of \(A^\prime\).
Therefore,
\[\rk(A^\prime)\leq\rk(A)=\rk(T)\leq \rk(A^\prime)\]
and thereby \(\rk(A)=\rk(A^\prime)\). Now, via Sylvester's law of interia
\[\rk(AD)=\rk(\sqrt{D}A\sqrt{D})=\rk(A)=\rk(A^\prime)=\rk(A^\prime D^\prime),\]
as claimed. To summarize, \(AD\) and \(A^\prime D^\prime\) have the same rank and if \(\lambda\) is an eigenvalue of \(A^\prime D^\prime\), then \(\lambda\) is an
eigenvalue of \(AD\). This completes the proof.
\end{proof}

\paragraph{\sffamily Proofs of the main results }
Now, we have everything at hand to verify Theorem \ref{thm:finite}. 
\begin{profff}

We set
\[\Phi_n:=\sup_{d\geq 1} \,\max\left\{ \frac{1}{d} \sum_{k=1}^n \lambda_k(A) : A\in \mathcal{A}_d \right\}.\]
First, we show for all \(d\geq n\) that
\[\Pi(n,d)\leq \Phi_n.\]
We abbreviate
\[\pi_n(AD):=\sum_{k=1}^n \lambda_k(AD).\]
Due to Theorem \ref{thm:known}, there exist matrices \(A\in \mathcal{A}_d\) and \(D\in\mathcal{D}_d\) such that 
\[\Pi(n,d)=\pi_n(AD).\]
Choose a sequence \(D_k\in \mathcal{D}_d\) of invertible matrices with rational entries satisfying
\begin{equation}\label{eq:approxU}
\Pi(n,d)\leq \pi_n(AD_k)+\frac{1}{2^k}.
\end{equation}
This is possible since \(\pi_n(AD)=\pi_n(\sqrt{D} A \sqrt{D})\) and because the map \(\pi_n(\cdot)\) is continuous on the set of symmetric matrices, cf. \cite[p. 44]{overton1992sum}. 
Fix \(k\geq 1\). By finding a common denominator, we may write 
\[D_k=\frac{1}{m}\Diag(n_1,\ldots, n_d),\]
where \(n_i\geq 1\) for all \(1 \leq i \leq d\) and \(m=n_1+\dotsm+n_d\).  We set
\[A_k:=\bl_{d}^{(n_d-1)}(\dotsm(\bl_1^{(n_{1}-1)}(A))\dotsm),\]
where we use the convention \(\bl_i^0(A)=A\). 
Note that \(A_{k}\in\mathcal{A}_{m}\). By applying Lemma \ref{lem:holyGrail} repeatedly, we get
that \(\lambda(A D_k)\) is obtained from  \(\lambda\left(A_k \frac{1}{m}\mathbbm{1}_m\right)\) by deleting exactly \((m-d)\) zero entries. 
As a result,
\begin{equation}\label{eq:upperPhi}
\pi_n(AD_k)\leq \frac{\pi_n(A_k)}{m}\leq \Phi_n.
\end{equation}
Thus, by combining \eqref{eq:upperPhi} with \eqref{eq:approxU}, we obtain
\[\Pi(n,d)\leq \Phi_n.\]
It is well-known that
\[\Pi_n=\lim_{d\to +\infty} \Pi(n,d).\]
Hence,
\[\Pi_n\leq \Phi_n.\]
The inequality \(\Phi_n\leq \Pi_n\) is a direct consequence of Theorem \ref{thm:known}. Putting everything together, we conclude
\[\Pi_n=\Phi_n.\]

We are left to show that it suffices to consider \(K_{n+2}\)-free two-graphs. 
To this end, fix an integer \(d> n\) and let \(A\in \mathcal{A}_d\) be a matrix such that
\begin{equation*}
\pi_n(A)=\max\big\{ \pi_n(A^\prime) : A^\prime\in \mathcal{A}_d\big\}. 
\end{equation*}
As the symmetric matrix \(A\) is orthogonally diagonalizable, there
are orthonormal vectors \(u_1,\ldots ,u_n \in \R^d\) such that
\[\pi_n(A)=\Trr(A UU^t),\]
where \(U\) is the matrix that has the vectors \(u_i\) as columns. Let \(r_k\) for \(1\leq k \leq d\) be the rows of the matrix \(U\). 
We use \(e_1, \ldots, e_d\in \mathbb{R}^d\) to denote the standard basis.  Fix \(1 \leq i, j \leq d\) and let \(\epsilon\in \mathbb{R}\) be a real number. We set
\begin{equation*}
A(i,j; \epsilon):=
\begin{cases}
\epsilon \, \sgn\big(\langle r_i,r_j \rangle_{_{\R^n}}\big) e_ie_j^t & \textrm{ if } \langle r_i,r_j \rangle\neq 0 \\
\epsilon \, e_ie_j^t & \textrm{ otherwise } 
\end{cases}
\end{equation*} 
and
\begin{equation*}
\widehat{A}_{\epsilon}:=A+\frac{1}{2}\big(A(i,j;\epsilon)+A(j,i;\epsilon)\big).
\end{equation*}
Clearly, \(\widehat{A}_{\epsilon}\) is symmetric.
Hereafter, we show that \(\langle r_i, r_j \rangle \neq 0\). To this end, suppose that \(\langle r_i, r_j \rangle= 0.\)

We set \(\epsilon_\star:= -4\,\sgn(a_{ij})\), and we observe that \(\widehat{A}_{\epsilon_\star}\in\mathcal{A}_d\). Further, we abbreviate \(\widehat{A}:=\widehat{A}_{\epsilon_\star}\). It holds that
\begin{equation}\label{eq:bocer}
\pi_n(A)=\Trr\left( A U U^t \right)=\Trr\left( \widehat{A} U U^t \right)-\epsilon_\star \sgn\big(\langle r_i,r_j \rangle\big) \langle r_i,r_j \rangle.
\end{equation}
 Via von Neumann's trace inequality, cf. \cite{Mirsky1975}, we obtain
 \[\Trr\left( \widehat{A} U U^t \right) \leq \pi_n(\widehat{A})\leq \pi_n(A);\]
thus, 
\[\Trr\left( \widehat{A} U U^t \right)=\pi_n(\widehat{A})=\pi_n(A).\]
The equality case of von Neumann's trace inequality occurs. Therefore, the diagonalizable matrices \(U U^t\) and \( \widehat{A}\) are simultaneously orthogonally diagonalizable and thereby commute. This implies that \(UU^t\) and \(\tfrac{1}{2}\big(A(i,j;\epsilon_\star)+A(j,i;\epsilon_\star)\big)\) commute; as a result, we get
that 
\begin{equation*}
\begin{split}
&\langle r_i,r_i\rangle=\langle r_j, r_j\rangle, \\
&\langle r_i,r_k \rangle= 0, \quad \quad \textrm{ for all } k\neq i \textrm{ with } k\in \{1, \ldots d\}, \\
&\langle r_j,r_k \rangle= 0, \quad \quad \textrm{ for all } k\neq j \textrm{ with }  k\in \{1, \ldots d\}. 
\end{split}
\end{equation*}

By applying the same argument to \(\langle r_i,r_k \rangle= 0\) for every \(k\neq i, k\in \{1, \ldots, d\}\), we may conclude that the vectors \(r_1, \ldots, r_d\in \R^n\) are orthogonal and none of them is equal to the zero vector. However, this is only possible if \(n=d\). Therefore, we have shown for \(d>n\) that \(\langle r_i, r_j \rangle \neq 0\) for all integers \(1 \leq i, j \leq d\). 

We claim that 
\begin{equation}\label{eq:important}
a_{ij}=\sgn\big(\langle r_i,r_j \rangle_{_{\R^{n}}}\big)
\end{equation}
for all \(1\leq i,j \leq d\). Because \(\langle r_i, r_j \rangle \neq 0\), this is a direct consequence of the maximality of \(\pi_n(A)\) and equality \eqref{eq:bocer}. 
Hence, we have shown that \(A\) and \(UU^t\) have the same sign pattern, which allows us to
invoke \cite[Lemma 2.1]{cia2013note}. From this result we see that \(A\) does not have a principal \((n+2)\times (n+2)\)-submatrix which has only \(-1\) as off-diagonal elements.
Such a matrix is the Seidel adjacency matrix of the complete graph on \(n+2\) vertices.
For that reason, we have shown that
\[\frac{1}{d}\pi_n(A)=\max\left\{ \frac{n}{d}+\frac{1}{d}\sum_{k=1}^n \lambda_k(T)  :  T \textrm{ is a \(K_{n+2}\)-free two-graph of order \(d\)} \right\}.\]
This completes the proof.
\end{profff}

We conclude this section with the proof of Corollary \ref{cor:translation}.
\begin{proffff}
Let \(J_2\in \mathcal{A}_2\) denote the all-ones matrix.
For every \(A\in \mathcal{A}_d\), the matrix \(A\otimes J_2\)
is contained in \(\mathcal{A}_{2d}\), where \(\otimes\) denotes the Kronecker product of matrices.
Moreover, since the eigenvalues of \(A\otimes J_2\) are precisely all possible products of an eigenvalue of \(A\) (counted with multiplicity) and an eigenvalue of \(J_2\) (counted with multiplicity), 
it is readily verified that
\[\frac{\pi_n(A)}{d}=\frac{\pi_n(A\otimes J_2)}{2d}.\]
Let \((\epsilon_\ell)_{\ell\geq 1}\) be a sequence of positive real numbers that converges to zero. Due to Theorem \ref{thm:finite} and the above, there exists a strictly increasing sequence \((d_\ell)_{\ell\geq 1}\) of integers and matrices \(A_\ell\in \mathcal{A}_{d_\ell}\) such that
\[ \Pi_n\leq \frac{\pi_n(A_\ell)}{d_\ell}+\epsilon_\ell.\]
We have
\[\pi_n(A_\ell)=d_\ell-\sum_{k=n+1}^{d_\ell} \lambda_k(A_\ell)=d_\ell+\sum_{k=1}^{d_{\ell}-n}\lambda_k(-A_\ell);\]
thus,
\[\pi_n(A_\ell)=d_\ell+\sum_{k=1}^{d_{\ell}-n}\lambda_k(\overline{A_\ell})-(d_\ell-n)2,\]
where \(\overline{A_\ell}=2\mathbbm{1}_{d_\ell}-A_\ell\). Consequently,
\[\Pi_n\leq \frac{2n}{d_\ell}-1+\frac{\pi_{d_\ell-n}(\overline{A_\ell})}{d_\ell}+\epsilon_\ell.\]
Since \(\overline{A_\ell}\in \mathcal{A}_{d_\ell}\,\), we obtain
\[\Pi_n\leq \frac{2n}{d_\ell}-1+\Pi(d_\ell-n,d_\ell)+\epsilon_\ell.\]
Proposition 2 in \cite{foucart2017maximal} tells us that
\[\Pi(d-n,d)\leq \Pi_n+1\]
for all \(d\geq 1\).
Thus, 
\[\Pi_n\leq \frac{2n}{d_\ell}-1+\Pi(d_\ell-n,d)+\epsilon_\ell\leq \Pi_n+\frac{2n}{d_\ell}+\epsilon_\ell;\]
for that reason, the desired result follows. 
\end{proffff}


\section{Polyhedral maximizers of \(\Pi_n(\cdot)\)}\label{sec:polyball}

\paragraph{\sffamily Projections in \(E\) and \(E^\ast\)} Let \((E, \norm{\cdot})\) be a Banach space and let \(V\subset E\) and \(F\subset E^\ast\) denote linear subspaces. We
set
\[V^0:=\big\{\ell\in E^\ast \, : \, \ell(v)=0 \textrm{ for all } v\in V \big\}\subset E\] 
and
\[F_0:=\big\{x\in E \,: \, f(x)=0 \textrm{ for all } f\in F \big\}\subset E^\ast.\] 
Suppose that \(U\subset E\) is a linear subspace such that \(E=V\oplus U\). The map
\[P_V^U\colon E\to V,\,\quad v+u\mapsto v\]
is a linear projection onto \(V\). 
In the subsequent lemma we gather useful results from functional analysis.
\begin{lemma}\label{lem:FunkAnna}
Let \((E, \norm{\cdot})\) be a Banach space. 
\begin{enumerate}
\item If there exist closed linear subspaces \(V,U\subset E\) such that  \(V\) is finite-dimensional and \(E=V\oplus U\), then \(E^\ast=V^0\oplus U^0\), \(\dim(U^0)=\dim(V)\), 
\[(V^0)_0=V \textrm{ and } (U^0)_0=U.\]
\item If there exist closed linear subspaces \(F,G\subset E^\ast\) such that \(F\) is finite-dimensional and \(E^\ast=F\oplus G\), then \(E=F_0\oplus G_0\), \(\dim(G_0)=\dim(F)\), 
\[(F_0)^0=F \textrm{ and } (G_0)^0=G.\] 
\item  If there exist closed linear subspaces \(V,U\subset E\) such that \(V\) is finite-dimensional and \(E=V\oplus U\),  then
\begin{equation*}
\lVert P_V^U \rVert=\lVert P_{U^0}^{V^0} \rVert.
\end{equation*}

\end{enumerate}
\end{lemma}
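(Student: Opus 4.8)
My plan is to prove (1) directly — it is the one assertion into which the hypotheses genuinely feed — and then to read off (3) and (2) from it by duality, everything else being formal manipulation of annihilators. In (1) the crucial point is that the algebraic projection $P:=P_V^U$ is bounded: since $U$ is closed, $E/U$ is a Banach space and the quotient map $q\colon E\to E/U$ is bounded, and since $E=V\oplus U$ the composite $V\hookrightarrow E\to E/U$ is a linear bijection, hence — $V$ being finite-dimensional — a linear homeomorphism; as $P$ is obtained by following $q$ with the inverse homeomorphism $E/U\to V\subseteq E$, it is bounded. Now for $\ell\in E^\ast$ decompose $\ell=(\ell-\ell\circ P)+(\ell\circ P)$: the first term annihilates $V=\image P$ and the second annihilates $U=\ker P$, so $E^\ast=V^0+U^0$, and the sum is direct because $V^0\cap U^0=(V+U)^0=E^0=\{0\}$. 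The identities $(V^0)_0=V$ and $(U^0)_0=U$ express that a norm-closed subspace is the pre-annihilator of its annihilator, which is the Hahn--Banach separation of a point from a closed subspace (the closedness of $U$, and of the finite-dimensional $V$, is what is used here). Finally, restriction to $V$ is a linear isomorphism $U^0\to V^\ast$ — injective because a functional in $U^0$ vanishing on $V$ vanishes on $E=V\oplus U$, surjective because $\mu\circ P\in U^0$ extends any given $\mu\in V^\ast$ — so $\dim U^0=\dim V$.

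For (3), write $P:=P_V^U$; by (1) it is bounded of finite rank $\dim V$. Its adjoint $P^\ast$ on $E^\ast$ is again idempotent, with $\ker P^\ast=(\image P)^0=V^0$; and $\image P^\ast$, being finite-dimensional and hence weak-$\ast$ closed, equals $(\ker P)^0=U^0$. By (1) the subspaces $U^0$ and $V^0$ are complementary in $E^\ast$, so the idempotent $P^\ast$ with image $U^0$ and kernel $V^0$ is precisely $P_{U^0}^{V^0}$; since passing to the adjoint preserves the operator norm, $\norm{P_V^U}=\norm{P^\ast}=\norm{P_{U^0}^{V^0}}$.

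For (2), apply (1) with $E^\ast$ in the role of $E$ and $F,G$ in the roles of $V,U$: this produces $E^{\ast\ast}=F^0\oplus G^0$ (annihilators formed inside $E^{\ast\ast}$), together with $\dim G^0=\dim F$, $(F^0)_0=F$ and $(G^0)_0=G$. Under the canonical embedding $E\hookrightarrow E^{\ast\ast}$ one has $F_0=E\cap F^0$ and $G_0=E\cap G^0$, so intersecting the decomposition $E^{\ast\ast}=F^0\oplus G^0$ with $E$ gives $E=F_0\oplus G_0$ and transports the remaining identities; the single point needing attention is that, for $x\in E$, the two components of its canonical image lie again in $E$ — automatic once $E=E^{\ast\ast}$, in particular whenever $E$ is reflexive, hence in all the finite-dimensional situations in which the lemma is applied below. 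The step I expect to be the actual crux is thus the boundedness of $P_V^U$ in (1): it is the only non-formal ingredient, and it is exactly what fails if $V$ is infinite-dimensional or $U$ is not closed; once it is granted, (1) is routine and (2) and (3) are its one-line duals.
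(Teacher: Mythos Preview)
Your proofs of (1) and (3) are correct and take a cleaner route than the paper's. For (1), you use the bounded projection $P=P_V^U$ to split any $\ell\in E^\ast$ as $(\ell-\ell P)+\ell P\in V^0+U^0$; the paper instead argues that $V^0+U^0$ is weak-star closed (via a finite-dimensional quotient) and weak-star dense (because it separates points of $E$). For (3), you identify $P_{U^0}^{V^0}$ with the adjoint of $P_V^U$ and invoke $\lVert T^\ast\rVert=\lVert T\rVert$, whereas the paper matches the two norms by rewriting each as the double supremum $\sup_{\lVert f+g\rVert=1}\sup_{\lVert v+u\rVert=1}\lvert g(v)\rvert$. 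Your arguments are shorter and yield the explicit operator identity $P_{U^0}^{V^0}=(P_V^U)^\ast$, which the paper does not record.

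Your treatment of (2), however, has a genuine gap. Passing to $E^{\ast\ast}=F^0\oplus G^0$ and intersecting with $E$ does \emph{not} in general give $E=F_0\oplus G_0$: the $G^0$-component of $\hat{x}$ is $Q^\ast\hat{x}$ with $Q=P_F^G$, and there is no reason for $Q^\ast\hat{x}$ to lie in the canonical image of $E$ unless $Q$ is weak-star continuous, i.e.\ unless $G$ is weak-star closed. You acknowledge this and retreat to reflexivity, but your claim that the lemma is only used ``in finite-dimensional situations'' is wrong: item (2) is invoked in the proof of Theorem~\ref{thm:secondPredual} with the r\^ole of the Banach space played by $X^\ast$, and in Theorem~\ref{thm:polyhedral} one takes $X=c_0$, so the relevant space is $\ell_1$ --- certainly not reflexive. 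The paper's argument for (2) stays inside $E$, without passing to the bidual: it shows $F_0+G_0$ is closed (its image in the finite-dimensional quotient $E/F_0$ is automatically closed) and dense (Hahn--Banach), and then reads off $(G_0)^0=G$ from (1).
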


\begin{proof}
We prove each item separately. 
\begin{enumerate}

\item If \(\ell \in V^0\cap U^0\), then \(\ell(x)=0\) for all \(x\in E\), implying \(V^0\cap U^0=\{ 0\}\).
As \(V\) and \(U\) are closed, we may deduce with the usual Hahn-Banach separation argument that
\[(V^0)_0=V \textrm{ and }  (U^0)_0=U.\]
Accordingly, \(((V^0)_0)^0=V^0\) and \(((U^0)_0)^0=U^0.\)
So, \(V^0\) and \(U^0\) are weak-star closed, cf. \cite[Theorem 4.7]{rudin1991functional}. 
The quotient \(E^\ast / V^0\) is a Hausdorff locally convex vector space and the quotient map \(\pi\colon E^\ast \to E^\ast / V^0\) is continuous.
We claim that the subspace \(\pi(U^0+V^0)\) is closed in \(E^\ast / V^0\). As every finite-dimensional linear subspace of a Hausdorff topological vector space is closed, cf. \cite[Theorem 1.21]{rudin1991functional},  it suffices to show that \(\pi(U^0+V^0)\) is finite-dimensional. Since
\[(E/ U)^\ast=U^0,\]
we see that \(U^0\) is finite-dimensional; thus,  the subspace \(\pi(U^0+V^0)\) is finite-dimensional as well. Therefore,  the subspace \(\pi(U^0+V^0)\) is closed in \(E^\ast / V^0\) and we get that \(V^0+U^0=\pi^{-1}(\pi(U^0+V^0))\) is weak-star closed.
Note that \(V^0+U^0\) is weak-star dense in \(E^\ast\), because \(V^0+U^0\) separates points of \(E\). 
This implies \(E^\ast=V^0\oplus U^0\), as desired.

\item Suppose that \(x\in F_0\cap G_0\). We have \(\ell(x)=0\) for all \(\ell\in E^\ast\). As the elements of \(E^\ast\) separate points, we
get that \(x=0\) and consequently \(F_0\cap G_0=\{0\}\). 
Since \(F\) is finite-dimensional and thereby weak-star closed, \cite[Theorem 4.7]{rudin1991functional} tells us that
\[(F_0)^0=F.\]
Using
\[(E/F_0)^\ast=(F_0)^0=F,\]
we may deduce that \(E/F_0\) is finite-dimensional. Let \(\pi\colon E\to E/F_0 \) denote the quotient map.
The linear subspace \(\pi(G_0+F_0)\subset E/F_0\) is finite-dimensional and thus closed. As a result,
\(F_0+G_0=\pi^{-1}(\pi(G_0+F_0))\) is closed. Via the familiar Hahn-Banach separation argument, it is not hard to check that
\(F_0+G_0\) is a dense subset of \(E\). For that reason, \(F_0\oplus G_0=E\). The first item tells us that
\[(F_0)^0\oplus (G_0)^0=E^\ast;\]
therefore \((G_0)^0=G\), as \((F_0)^0=F\). This completes the proof of the second item. 

\item We abbreviate \(F:=V^0\) and \(G:=U^0\). We compute
\begin{equation*}
\lVert P_{U^0}^{V^0} \rVert=\underset{\substack{\norm{f+g}=1,\\ f\in F,\, g\in G }}{\sup}\norm{g}=\underset{\substack{\norm{f+g}=1,\\ f\in F,\, g\in G }}{\sup} \,\,\, \underset{\substack{\norm{v+u}=1,\\ v\in V,\, u\in U }}{\sup} \abs{g(v)} 
\end{equation*}
and
\begin{equation*}
\underset{\substack{\norm{v+u}=1,\\ v\in V,\, u\in U }}{\sup}\,\,\,\underset{\substack{\norm{f+g}=1,\\ f\in F,\, g\in G }}{\sup}  \abs{g(v)}=\underset{\substack{\norm{v+u}=1,\\ v\in V,\, u\in U }}{\sup} \norm{v}=\lVert P_V^U \rVert,
\end{equation*}
as was to be shown. 
\end{enumerate}
\end{proof}

\paragraph{\sffamily Construction of polyhedral maximizers}
Let \((E, \norm{\cdot})\) be a Banach space and let \(F\subset E\) denote a finite-dimensional linear subspace.
The number
\[\Pi(F,E):=\inf\left\{ \norm{P} \, \mid \,   P\colon E \to F \textrm{ bounded surjective linear map with } P^2=P \right\}.\]
is called the \textit{relative projection constant} of \(F\) with respect to \(E\). 
The following theorem translates the calculation of relative projection constants to second preduals (if such a space exists).
\begin{theorem}\label{thm:secondPredual}
Let \((E, \norm{\cdot})\) be a Banach space and let \(F\subset E\) denote a finite-dimensional linear subspace.
If \((X, \norm{\cdot})\) is a Banach space such that \(E=X^{\ast\ast}\), then there exist a linear subspace \(V\subset X\) with \(\dim(V)=\dim(F)\) and
\[\Pi(F,E)=\Pi(V,X).\]
\end{theorem}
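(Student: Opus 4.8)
The plan is to dualize: if $E = X^{\ast\ast}$, then $F \subset X^{\ast\ast}$ is a finite-dimensional subspace, and I want to realize $\Pi(F,E)$ as a relative projection constant inside $X$. The natural candidate for $V$ is obtained by annihilators. First I would pass to $F^0 \subset X^{\ast\ast\ast}$ and then intersect with the canonical copy of $X^\ast$ inside $X^{\ast\ast\ast}$; more precisely, let $W := F^0 \cap X^\ast$ (viewing $X^\ast \hookrightarrow X^{\ast\ast\ast}$ via the canonical embedding), which is a weak-$\ast$ closed, finite-codimensional subspace of $X^\ast$ since $F$ is finite-dimensional and the restriction map $X^{\ast\ast\ast} \to F^\ast$ is weak-$\ast$ continuous on $X^\ast$. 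Then set $V := W_0 \subset X$, the pre-annihilator of $W$ in $X$. Because $W$ is finite-codimensional and weak-$\ast$ closed, $\dim(V) = \operatorname{codim}(W) = \dim(F)$, and $W = V^0$ by a standard bipolar argument (Hahn–Banach separation, as used in Lemma~\ref{lem:FunkAnna}). This pins down $V$ with the correct dimension.

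Next I would relate the projection constants. By Lemma~\ref{lem:FunkAnna}(3) applied twice, relative projection norms are invariant under passing to annihilators: for a finite-dimensional $V \subset X$ with complement $U$, $\lVert P_V^U \rVert = \lVert P_{U^0}^{V^0} \rVert$ in $X^\ast$, and iterating, $\Pi(V,X) = \Pi(V^0, X^\ast)$ where $V^0 \subset X^\ast$ is finite-codimensional — here one uses that complements of $V$ in $X$ correspond to complements of $V^0$ in $X^\ast$, together with the infimum over complements. So $\Pi(V,X) = \Pi(W, X^\ast)$ with $W = V^0$ finite-codimensional in $X^\ast$. Then I would apply the analogous "codimension" version once more, going from $X^\ast$ up to $X^{\ast\ast} = E$: since $W \subset X^\ast$ is finite-codimensional and weak-$\ast$ closed in $X^\ast$ (as a subspace of the predual perspective), its annihilator $W^0 \subset X^{\ast\ast} = E$ is finite-dimensional with $\dim(W^0) = \operatorname{codim}(W) = \dim(F)$, and $\Pi(W, X^\ast) = \Pi(W^0, X^{\ast\ast})$. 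Finally I must check $W^0 = F$: by construction $W = F^0 \cap X^\ast$, and since $F$ is finite-dimensional (hence weak-$\ast$ closed in $E = X^{\ast\ast}$), the bipolar theorem relative to the dual pair $(X^{\ast\ast}, X^\ast)$ gives $W^0 = (F^0 \cap X^\ast)_{\perp} = F$; the key point is that $F \subset X^{\ast\ast}$ is already weak-$\ast$ closed because it is finite-dimensional, so no closure is lost. Chaining the equalities yields $\Pi(F,E) = \Pi(V,X)$.

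The main obstacle I anticipate is bookkeeping the three layers of duality correctly — in particular making sure that at each stage the relevant subspace is \emph{weak-$\ast$ closed} in the appropriate space so that the annihilator/pre-annihilator operations are honest inverses of one another and preserve (co)dimension. The passage $X^\ast \to X^{\ast\ast}$ is the delicate one: one needs $W = V^0$ to be weak-$\ast$ closed in $X^\ast$ with respect to the $X$-topology, which holds because $W = V^0$ is a finite-codimensional subspace that contains the annihilator of a finite-dimensional subspace of $X$ and is in fact equal to it, hence automatically weak-$\ast$ closed (it is an intersection of kernels of finitely many evaluation functionals $x \in X$). A secondary technical point is the statement $\Pi(V,X) = \Pi(V^0,X^\ast)$ itself: this requires noting that every finite-dimensional complement $U$ of $V$ in $X$ is closed, that $U^0$ is then a complement of $V^0$ in $X^\ast$ of the right dimension (Lemma~\ref{lem:FunkAnna}(1)), and — for the reverse inequality — that every complement of $V^0$ in $X^\ast$ arising from a weak-$\ast$ closed subspace is of the form $U^0$, so that taking infima on both sides matches; the non-weak-$\ast$-closed complements only make the infimum over $X^\ast$-side projections potentially smaller, but the finite-dimensionality of $V^0$'s role is reversed here, so one should instead run the argument as $\Pi(V^0, X^\ast) = \Pi(V, X)$ directly via item~(3) plus items~(1)–(2) to control which complements occur. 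Once these closedness checks are in place, everything else is a routine diagram chase through Lemma~\ref{lem:FunkAnna}.
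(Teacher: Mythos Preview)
Your approach is essentially the paper's: your detour through $X^{\ast\ast\ast}$ lands on $W = F_0 \subset X^\ast$, so $V = W_0 = (F_0)_0$ is exactly the paper's choice, and the comparison of projection constants proceeds via two applications of Lemma~\ref{lem:FunkAnna}(3) together with items (1) and (2), just as in the paper.

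One point to tighten: the intermediate identity you write as $\Pi(V,X) = \Pi(V^0, X^\ast)$ is not what item~(3) delivers. The equality $\lVert P_V^U\rVert = \lVert P_{U^0}^{V^0}\rVert$ is the norm of a projection \emph{onto} the finite-dimensional $U^0$, not onto $V^0$, and in general $\lVert P_{U^0}^{V^0}\rVert \neq \lVert P_{V^0}^{U^0}\rVert$; so the infimum you obtain on the $X^\ast$-side is $\inf_{G}\lVert P_G^{V^0}\rVert$, not $\Pi(V^0,X^\ast)$. This slip is harmless for the conclusion, because a second application of item~(3) flips the roles back and gives $\lVert P_{U^0}^{V^0}\rVert = \lVert P_{(V^0)^0}^{(U^0)^0}\rVert = \lVert P_F^{G'}\rVert$, which is what you want. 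The paper sidesteps the issue entirely by not naming any intermediate quantity: it simply proves $\Pi(V,X)\leq \Pi(F,E)$ using items (2)+(3) (start from a complement of $F$ in $E$, push down to $X$) and $\Pi(F,E)\leq \Pi(V,X)$ using items (1)+(3) (start from a complement of $V$ in $X$, push up to $E$).
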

\begin{proof}
It is not hard to check that
\[\Pi(F,E):=\inf\big\{ \lVert P_F^G \rVert : E=F\oplus G, G\subset E \textrm{ closed linear subspace } \big\}. \]
We set \(V:=(F_0)_0\). On the one hand, using the second and third item of Lemma \ref{lem:FunkAnna}, we obtain
\[\Pi(V,X) \leq \Pi(F,E);\]
on the other hand, using the first and third item of Lemma \ref{lem:FunkAnna}, we infer
\[\Pi(F,E) \leq \Pi(V,X).\]
This completes the proof.
\end{proof}
We conclude this section with the proof of Theorem \ref{thm:polyhedral}.
\begin{FirstAidKit}
Let \(F\subset \ell_\infty\) be an \(n\)-dimensional linear subspace with
\[\Pi_n=\Pi(F, \ell_\infty).\]
Via Theorem \ref{thm:secondPredual}, there exists an \(n\)-dimensional linear subspace \(V\subset c_0\) such that
\[\Pi(F, \ell_\infty)=\Pi(V, c_0).\]
As \(\Pi(V, c_0)\leq \Pi(V)\), we get
\[\Pi_n=\Pi(V).\]
This completes the proof, since due to a result of Klee, cf. \cite[Proposition 4.7]{klee1960}, every finite-dimensional subspace of \(c_0\) is polyhedral.
\end{FirstAidKit}

\section{Applications: Computation of \(\Pi_2\) and \(\Pi(4,6)\) }\label{sec:applo}
\paragraph{\sffamily List of all \(K_4\)-free two-graphs}\label{para:classi}
Let \(n\geq 1\) be an integer, let \(\mathcal{R}_{2n+1}\subset \R^2\) be a regular \((2n+1)\)-gon centred at the origin and let \(V(\mathcal{R}_{2n+1})\) denote the vertices of
\(\mathcal{R}_{2n+1}\). Further, we let \(\mathcal{T}_{2n+1}\) denote the two-graph that has \(V(\mathcal{R}_{2n+1})\) as vertex set and \(\{v_1, v_2, v_3\}\subset V(\mathcal{R}_{2n+1})\) is an edge if and only if the origin is contained in the closed convex hull of \(v_1, v_2, v_3\).  It is readily verified that \(\delta(R_{2n+1}-\mathbbm{1}_{2n+1})=\mathcal{T}_{2n+1}\) for
\[R_{2n+1}:=\left( \begin{array}{ccc}
1 & j^t & -j^t \\
j & J_{n} & J_{n}-2L_n \\
-j &  J_{n}-2L_n^t &  J_{n}  \end{array} \right),\]
where \(j\in \R^n\) is the all-ones vector, \(J_n\) is the all-ones \(n\times n\)-matrix and \(L_n\) is the \(n\times n\)-matrix given by
\[(L_n)_{ij}:=
\begin{cases}
-1 & i>j \\
0 & \textrm{otherwise }.
\end{cases}
\]
Note that \(L_n\) has only \(-1\)'s below the diagonal and only \(0\)'s above the first sub-diagonal.
We set
\[A_6:=\left(
\begin{array}{cccccc}
 1 & 1 & 1 & 1 & 1 & 1 \\
 1 & 1 & 1 & 1 & -1 & -1 \\
 1 & 1 & 1 & -1 & 1 & -1 \\
 1 & 1 & -1 & 1 & -1 & 1 \\
 1 & -1 & 1 & -1 & 1 & 1 \\
 1 & -1 & -1 & 1 & 1 & 1 \\
\end{array}
\right).\]
\begin{figure}
\centering
\includegraphics[scale=0.3]{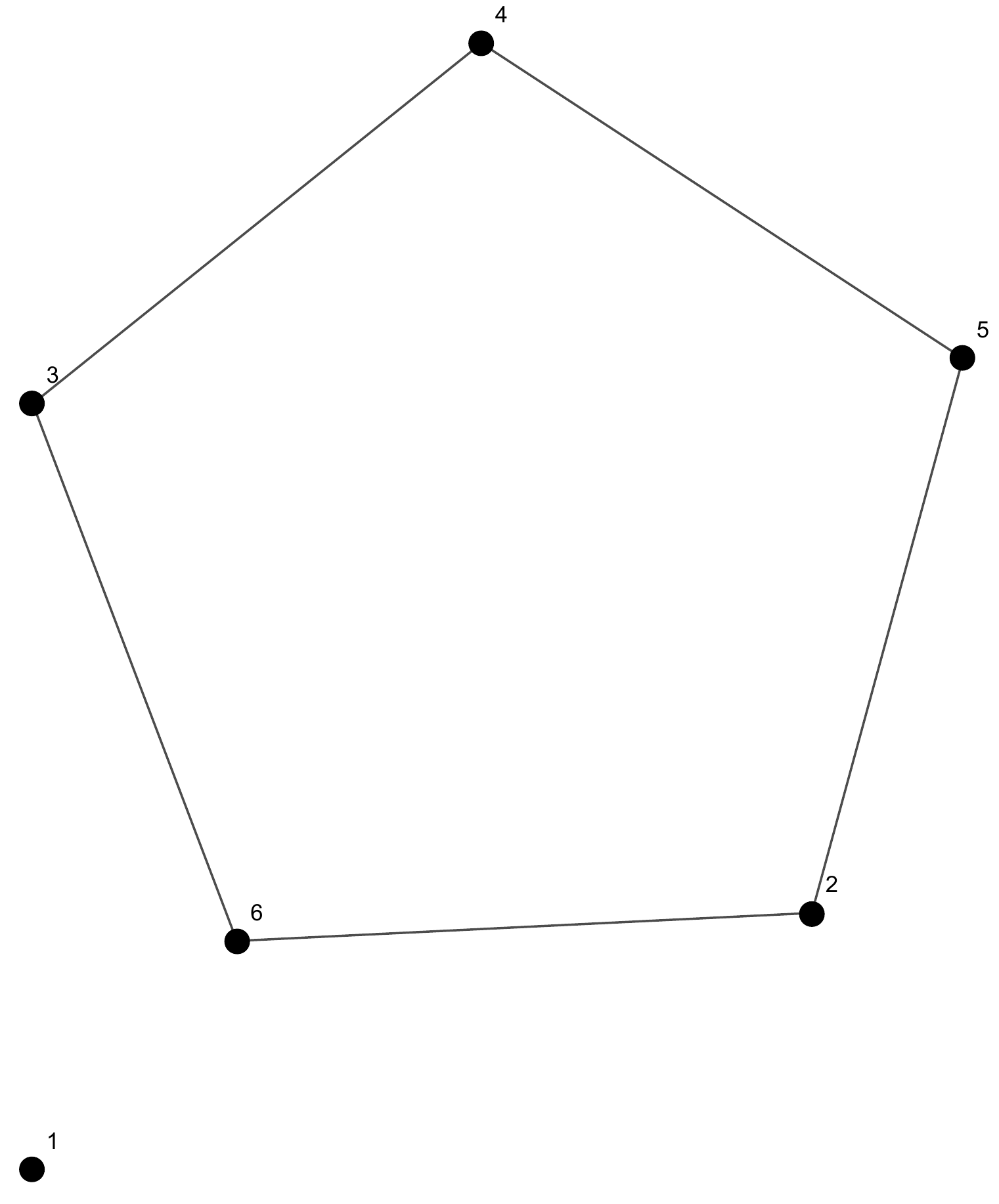}
\caption{The graph that has \(A_6-\mathbbm{1}_6\) as Seidel adjacency matrix. }
\label{fig:penta+1}
\end{figure}One can check that \(A_6-\mathbbm{1}_6\) is the Seidel adjacency matrix of the graph depicted in Figure \ref{fig:penta+1}. 
We abbreviate
\[\Omega:=\big\{ S : S \textrm{ is a principal submatrix of } A_6-\mathbbm{1}_6\big\}\cup \big\{ R_{2n+1}-\mathbbm{1}_{2n+1} : n\geq 1 \big\}.\]
In \cite{frankl1984exact},  Frankl and F{\"u}redi showed that each non-empty \(K_4\)-free two-graph belongs to the set
\[\delta(\Omega) \cup \big\{ \delta(B) : \textrm{ B  is a blow-up of a matrix in } \Omega \big\}.\]

\paragraph{\sffamily How to maximize the first \(n\) eigenvalues of \(AD\)?}
Given a matrix \(A\in \mathcal{A}_d\), we denote by \(\Stab(A)\) the set
\[\big\{ Q\in O_{d}(\mathbb{Z}) : A=Q A Q^t \big\}.\]
We use \(O_{d}(\mathbb{Z})\) to denote the group of orthogonal \(d\times d\)-matrices with integer entries. 
Every \(Q\in  \Stab(A) \) has a unique decomposition \(Q=PD\), where \(P\) is a permutation matrix and \(D\) is a diagonal matrix consisting only of \(1\)'s and \(-1\)'s. We write \(P_\tau:=P\) if the permutation matrix \(P\) is associated to the permutation \(\tau\), that is, \(P_{ij}=(e_{\tau(i)})_j\). 
The group  \(\Stab(A)\) acts on \(\left\{1, \ldots, d\right\}\) via 
\[(P_\tau D, k) \mapsto \tau(k).\]
Two Seidel adjacency matrices \(S_f\) and \(S_g\) are called \textit{switching equivalent} if \(\delta(f)=\delta(g)\). This gives rise to an equivalence relation, equivalence classes are called switching classes. 
The lemma below tells us that the orbit decomposition of the action \(\Stab(A)\curvearrowright \left\{1, \ldots, d\right\}\) may be obtained by determining the switching class of every principal \((d-1)-\)dimensional submatrix of \(A\).

\begin{lemma}\label{lem:dijon}
Let \(A\in\mathcal{A}_d\) be a matrix, let \(1\leq i,j \leq d\) be two integers and for \(k=i,j\) let \(T_k\) denote the submatrix of \(A-\mathbbm{1}_d\) obtained by deleting the \(k\)-th column and the \(k\)-th row of \(A-\mathbbm{1}_d\). 

Then, the matrices \(T_i\) and \(T_j\) are switching equivalent if and only if the integers \(i\) and \(j\) lie in the same orbit under the action \(\Stab(A)\curvearrowright \left\{1, \ldots, d\right\}\).
\end{lemma}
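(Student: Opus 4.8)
Throughout put $S:=A-\mathbbm{1}_d$, a Seidel adjacency matrix, and let $T=(\{1,\dots,d\},\Delta)$ be its associated two-graph, so that (an easy consequence of the definitions of $\Delta$ and of the Seidel adjacency matrix) a triple $\{x,y,z\}$ lies in $\Delta$ exactly when $S_{xy}S_{xz}S_{yz}=-1$; I will use this \emph{triple criterion} repeatedly. For $v\in\{1,\dots,d\}$ write $\ell_v$ for the graph on $\{1,\dots,d\}\setminus\{v\}$ with $\{a,b\}\in E(\ell_v)\Leftrightarrow\{v,a,b\}\in\Delta$; the triple criterion gives $S(\ell_v)=D_vT_vD_v$, where $D_v:=\Diag((S_{vu})_{u\neq v})$ and $T_v$ is the submatrix of $S$ with its $v$-th row and column deleted (so $T_i,T_j$ are the matrices from the statement). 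Recall that $O_d(\mathbb Z)$ is the group of signed permutation matrices and that $Q\mathbbm{1}_dQ^t=\mathbbm{1}_d$ for every $Q\in O_d(\mathbb Z)$, so $\Stab(A)=\{Q\in O_d(\mathbb Z):QSQ^t=S\}$; and that $T_i$ and $T_j$ are switching equivalent precisely when there are a bijection $\sigma\colon\{1,\dots,d\}\setminus\{i\}\to\{1,\dots,d\}\setminus\{j\}$ and a function $\eta\colon\{1,\dots,d\}\setminus\{i\}\to\{\pm1\}$ with $S_{\sigma(a)\sigma(b)}=\eta(a)\eta(b)S_{ab}$ for all $a,b\neq i$. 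I call such a pair $(\sigma,\eta)$ a \emph{witness}, and the plan is to prove the two implications of the lemma in turn.

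Suppose first $Q=P_\tau\Diag(\epsilon_1,\dots,\epsilon_d)\in\Stab(A)$ has $\tau(i)=j$. Then $QSQ^t=S$ says $S_{kl}=\epsilon_{\tau(k)}\epsilon_{\tau(l)}S_{\tau(k)\tau(l)}$ for all $k,l$; since $\tau$ maps $\{1,\dots,d\}\setminus\{i\}$ bijectively onto $\{1,\dots,d\}\setminus\{j\}$, reading this off for $k,l\neq i$ shows that $(\sigma,\eta)$ with $\sigma:=\tau|_{\{1,\dots,d\}\setminus\{i\}}$ and $\eta(a):=\epsilon_{\tau(a)}$ is a witness. Hence "same orbit" implies "switching equivalent".

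For the converse I would start from an arbitrary witness $(\sigma,\eta)$ and try to promote it to a matrix of $\Stab(A)$ moving $i$ to $j$: extend $\sigma$ to $\tau$ by $\tau(i):=j$, put $\epsilon_a:=\eta(\sigma^{-1}(a))$ for $a\neq j$, and look for $\epsilon_j\in\{\pm1\}$ so that $\widetilde Q:=P_\tau\Diag(\epsilon_1,\dots,\epsilon_d)$ satisfies $\widetilde QS\widetilde Q^t=S$. The equations $S_{kl}=\epsilon_{\tau(k)}\epsilon_{\tau(l)}S_{\tau(k)\tau(l)}$ with $k,l\neq i$ hold because $(\sigma,\eta)$ is a witness, and the diagonal ones are trivial; by the symmetry of $S$ the remaining equations (those with $k=i$) reduce, after substituting $\tau(i)=j$ and using $S_{ab}^2=1$, to
\[ \epsilon_j=\eta(a)\,S_{j\sigma(a)}\,S_{ia}\qquad\text{for all }a\in\{1,\dots,d\}\setminus\{i\}. \]
So everything hinges on $\eta(a)S_{j\sigma(a)}S_{ia}$ being independent of $a$. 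Forming the product of the values at $a$ and $b$ and substituting $\eta(a)\eta(b)=S_{ab}S_{\sigma(a)\sigma(b)}$ turns it into $(S_{ia}S_{ib}S_{ab})(S_{j\sigma(a)}S_{j\sigma(b)}S_{\sigma(a)\sigma(b)})$, which by the triple criterion is $+1$ exactly when $\{i,a,b\}$ and $\{j,\sigma(a),\sigma(b)\}$ are simultaneously in $\Delta$ or simultaneously not. In other words, $\epsilon_j$ exists precisely when the witness $\sigma$, once extended by $i\mapsto j$, also respects the triples of $\Delta$ through $i$ — which a generic witness (knowing only about the triples avoiding $i$ and $j$) need not do.

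The remaining task is therefore to produce \emph{one} witness with that extra property, and here I would pass to the descendants. From $S(\ell_v)=D_vT_vD_v$ one sees that $Q\mapsto D_jQD_i$ is a bijection between signed permutations conjugating $T_i$ to $T_j$ and those conjugating $S(\ell_i)$ to $S(\ell_j)$; in particular $T_i$ and $T_j$ are switching equivalent if and only if $\ell_i$ and $\ell_j$ are. Now if $P$ is an honest permutation matrix with $PS(\ell_i)P^t=S(\ell_j)$, then $Q:=D_jPD_i$ conjugates $T_i$ to $T_j$, and a direct computation (expanding $S(\ell_j)_{\sigma(a)\sigma(b)}=S(\ell_i)_{ab}$ via the triple criterion) shows that for its witness $(\sigma,\eta)$ the quantity $\eta(a)S_{j\sigma(a)}S_{ia}$ is constant in $a$; hence the construction of the previous paragraph yields $\widetilde Q\in\Stab(A)$ with $\widetilde Q(i)=j$, so $i$ and $j$ lie in one $\Stab(A)$-orbit. (Conversely, an element of $\Stab(A)$ moving $i$ to $j$ restricts, after absorbing signs via $D_i,D_j$, to such a permutation $P$.) Thus the lemma is equivalent to the assertion that switching-equivalent descendants of a two-graph are isomorphic as graphs; proving this — where one must upgrade a signed-permutation equivalence $S(\ell_i)\to S(\ell_j)$ to an unsigned one, exploiting the coset freedom in the choice of equivalence together with the cocycle relation $\delta f_T=0$ — is the step I expect to be the real obstacle, everything else being the bookkeeping above.
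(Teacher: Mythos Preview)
Your forward direction (same orbit $\Rightarrow$ switching equivalent) is correct and is indeed a direct unpacking of what it means for $P_\tau D$ to lie in $\Stab(A)$; this much is genuinely ``straightforward from the definitions''.

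For the converse you do far more than the paper, whose entire proof is the single sentence ``This is a straightforward consequence of the definitions.'' You correctly reduce the problem to the following: given a signed-permutation equivalence $T_i\to T_j$ (equivalently $S(\ell_i)\to S(\ell_j)$), produce an \emph{unsigned} permutation equivalence $\ell_i\to\ell_j$; you then show that such an honest graph isomorphism yields a witness with $\eta(a)S_{j\sigma(a)}S_{ia}\equiv 1$, hence extends to an element of $\Stab(A)$ sending $i$ to $j$. This reduction is clean and your computations check out. But you then explicitly leave the upgrading step open, calling it ``the real obstacle'', so your argument is incomplete by your own admission.

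The obstacle you isolate is genuine and is not dissolved by any routine appeal to definitions. What you have shown is that the converse is \emph{equivalent} to the statement that two-graphs admit no pseudo-similar vertices: whenever $T|_{V\setminus\{i\}}\cong T|_{V\setminus\{j\}}$ as two-graphs, the descendant graphs $\ell_i,\ell_j$ must already be isomorphic as graphs. This is not a tautology---two graphs in the same switching class need not be isomorphic, and knowing only the induced sub-two-graph $T|_{V\setminus\{i\}}$ does not by itself pin down which graph in that class $\ell_i$ is. Neither your write-up nor the paper's one-line proof supplies an argument for this step; you have simply been more honest about where the difficulty lies.

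In short: compared with the paper you have a much more substantive analysis that correctly locates the nontrivial content of the converse, but you stop precisely there. The paper asserts the result; you reduce it to a sharp claim and leave that claim unproved. Neither constitutes a complete proof of the ``only if'' direction under the natural reading of ``switching equivalent'' (isomorphic as two-graphs).
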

\begin{proof}
This is a straightforward consequence of the definitions. 
\end{proof}
Let \(M\) be a diagonalizable \(d\times d\)-matrix over the real numbers. We set
\[\pi_n(M):=\sum_{k=1}^n \lambda_k(M)\]
for each integer \(1\leq n \leq d\). The following lemma simplifies the calculation of the maximum value of the function \(D\mapsto \pi_n(AD)\) if the action \(\Stab(A)\curvearrowright \left\{1, \ldots, d\right\}\) is transitive. 
\begin{lemma}\label{lem:preChristmas}
Let \(A\in \mathcal{A}_d\) be a matrix and let \(1< n \leq d\) be an integer.
If \(\Lambda\in \mathcal{D}_d\) is a invertible matrix such that
\[\pi_n(A\Lambda)=\max_{D\in \mathcal{D}_d} \pi_n(AD),\]
then 
\[Q^2\Lambda (Q^2)^t=\Lambda\]
for all \(Q\in \Stab(A)\).
In particular, if \(d\) is odd and the action \(\Stab(A)\curvearrowright \left\{1, \ldots, d\right\}\) is transitive, then \(\Lambda=\frac{1}{d}\mathbbm{1}_d\). 
\end{lemma}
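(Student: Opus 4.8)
The plan is to exploit that $\Stab(A)$ is a symmetry group of the maximization problem $\max_{D\in\mathcal{D}_d}\pi_n(AD)$ and then to force rigidity by an arithmetic--geometric mean argument.

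For $Q\in\Stab(A)$ we have $QAQ^t=A$ and $Q^tQ=\mathbbm{1}_d$, hence $AQ=QA$. Write $Q=P_\tau D_\sigma$ with $P_\tau$ a permutation matrix and $D_\sigma$ a $\pm1$ diagonal matrix. Then $Q\Lambda Q^t=P_\tau\Lambda P_\tau^t$ is again a diagonal matrix --- its diagonal is the $\tau$-permutation of the diagonal of $\Lambda$ --- so it lies in $\mathcal{D}_d$ and is invertible; moreover $A(Q\Lambda Q^t)=Q(A\Lambda)Q^t$ is similar to $A\Lambda$, so $\pi_n\!\big(A(Q\Lambda Q^t)\big)=\pi_n(A\Lambda)$ and $Q\Lambda Q^t$ is again a maximizer, and likewise $Q^t\Lambda Q$ is. Since $Q^2\Lambda(Q^2)^t=\Lambda$ is equivalent to $Q^2$ commuting with $\Lambda$, and this in turn to $Q\Lambda Q^t=Q^t\Lambda Q$, the first assertion reduces to showing that these two maximizers coincide.

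For that I would use the reformulation, valid for invertible $D=\Diag(d_1,\dots,d_d)$,
\[\pi_n\!\big(\sqrt{D}A\sqrt{D}\big)=\max\Big\{\textstyle\sum_{i,j}A_{ij}\langle w_i,w_j\rangle \;:\; w_1,\dots,w_d\in\mathbb{R}^n,\ \sum_i d_i^{-1}\,w_iw_i^t=\mathbbm{1}_n\Big\},\]
obtained from $\pi_n(M)=\max\{\Trr(MUU^t):U^tU=\mathbbm{1}_n\}$ via the substitution $w_i=\sqrt{d_i}\,u_i$, with $u_i$ the rows of $U$. Starting from optimal configurations for the two maximizers $Q\Lambda Q^t$ and $Q^t\Lambda Q$ --- which can be taken to be signed $\tau$- and $\tau^{-1}$-permutations of a single optimal configuration for $\Lambda$ --- one builds an admissible configuration whose associated diagonal is, up to a normalizing constant $c\le1$, the entrywise geometric mean $i\mapsto\sqrt{(Q\Lambda Q^t)_{ii}\,(Q^t\Lambda Q)_{ii}}$, and whose value is $\ge\Pi(n,d)$. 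Since that value is also $\le\Pi(n,d)$ by maximality, $c=1$, that is $\sum_i\sqrt{(Q\Lambda Q^t)_{ii}(Q^t\Lambda Q)_{ii}}=1=\sum_i\tfrac12\big((Q\Lambda Q^t)_{ii}+(Q^t\Lambda Q)_{ii}\big)$, and the equality case of the AM--GM inequality forces $Q\Lambda Q^t=Q^t\Lambda Q$. The main obstacle is the construction of this admissible configuration: $D\mapsto\pi_n(\sqrt{D}A\sqrt{D})$ is \emph{not} concave on $\mathcal{D}_d$, so one cannot simply average the two maximizers, and the geometric mean has to be realized at the level of the vectors $w_i$; it is exactly the fact that the two configurations arise from one via the symmetry $Q$ that makes the argument control $Q^2$ and not $Q$ (controlling $Q$ would wrongly yield uniqueness of the maximizer).

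For the ``in particular'' statement, let $G$ be the image of $\Stab(A)$ under $Q\mapsto\tau$; this is a subgroup of the symmetric group on $\{1,\dots,d\}$, it is transitive by hypothesis, and the action $\Stab(A)\curvearrowright\{1,\dots,d\}$ factors through it. By the first part the diagonal of $\Lambda$ is invariant under $\tau^2$ for every $\tau\in G$, hence under the normal subgroup $N:=\langle\tau^2:\tau\in G\rangle$, for which $G/N$ is an elementary abelian $2$-group. Were $N$ intransitive, its orbits would form a block system with $k\ge2$ blocks of equal size $d/k$, on which $G$ acts transitively through the $2$-group $G/N$; but a $2$-group that acts transitively on $k$ points forces $k$ to be a power of $2$, contradicting $k\mid d$ with $d$ odd. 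So $N$ is transitive, the diagonal of $\Lambda$ is constant, and $\Lambda=\tfrac1d\mathbbm{1}_d$.
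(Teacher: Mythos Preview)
Your argument for the ``in particular'' clause is correct and essentially identical to the paper's: the squares generate a normal subgroup $N$ with $G/N$ a $2$-group, and oddness of $d$ forces $N$ itself to be transitive.

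The first part, however, has a genuine gap. You correctly reduce the claim to showing $Q\Lambda Q^t=Q^t\Lambda Q$, and you correctly identify that the mechanism should be an AM--GM/Cauchy--Schwarz equality case applied to the entrywise geometric mean of these two diagonals. But then you write that ``one builds an admissible configuration whose associated diagonal is [\ldots] the entrywise geometric mean [\ldots] and whose value is $\geq\Pi(n,d)$'', and immediately afterwards concede that ``the main obstacle is the construction of this admissible configuration'' and that ``the geometric mean has to be realized at the level of the vectors $w_i$''. You never carry out that construction; the proof stops at exactly the point where the content lies.

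The good news is that the construction you are looking for is far simpler than you suspect, and does not require touching the vectors $w_i$ at all. Since $Q\in\Stab(A)$ and conjugation by $Q$ acts on diagonal matrices by permutation, one has $Q^t\sqrt{\Lambda}\,Q=\sqrt{Q^t\Lambda Q}$, and therefore the single similarity
\[
Q^t\,\sqrt{\Lambda}\,A\,\sqrt{\Lambda}\,Q \;=\; \big(Q^t\sqrt{\Lambda}\,Q\big)\,A\,\big(Q^t\sqrt{\Lambda}\,Q\big)\;=\;\sqrt{\Lambda_{Q^t}}\,A\,\sqrt{\Lambda_{Q}},
\]
with $\Lambda_{Q}:=Q\Lambda Q^t$, already gives
\[
\pi_n(A\Lambda)\;=\;\pi_n\big(\sqrt{\Lambda_{Q^t}}\,A\,\sqrt{\Lambda_{Q}}\big)\;=\;\pi_n\!\big(A\,\sqrt{\Lambda_{Q}}\sqrt{\Lambda_{Q^t}}\big).
\]
The diagonal matrix $\sqrt{\Lambda_{Q}}\sqrt{\Lambda_{Q^t}}$ \emph{is} the geometric-mean diagonal you wanted, produced for free. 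Normalizing it to trace one and using maximality of $\Lambda$ yields $\Trr\!\big(\sqrt{\Lambda_{Q}}\sqrt{\Lambda_{Q^t}}\big)\geq 1$; Cauchy--Schwarz gives the reverse inequality, and its equality case forces $\Lambda_{Q}=\Lambda_{Q^t}$. This is exactly the paper's route. Your variational reformulation with the $w_i$'s and the attempted vector-level interpolation are unnecessary detours around a two-line computation.
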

\begin{proof}
For each \(Q\in  \Stab(A)\) we have
\[Q^t \sqrt{\Lambda}A\sqrt{\Lambda} Q=Q^t \sqrt{\Lambda}Q A Q^t \sqrt{\Lambda}Q=\sqrt{\Lambda_{Q^t}}A\sqrt{\Lambda_{Q}},\]
where \(\Lambda_Q:=Q\lambda Q^t\). Consequently,
\[\pi_n(\sqrt{\Lambda} A \sqrt{\Lambda})=\pi_n\left(\sqrt{\Lambda_{Q^t}}A\sqrt{\Lambda_{Q}}\right)=\pi_n\left(A\sqrt{\Lambda_{Q}}\sqrt{\Lambda_{Q^t}}\right).\]
We get
\[1 \leq \Trr\left(\sqrt{\Lambda_{Q}}\sqrt{\Lambda_{Q^t}}\right).\]
Via the Cauchy-Schwarz inequality, we deduce
\[\Trr\left(\sqrt{\Lambda_{Q}}\sqrt{\Lambda_{Q^t}}\right)\leq 1;\]
as a result, there exists a real number \(\alpha\geq 0\) such that
\[\Lambda_{Q}=\alpha \Lambda_{Q^t}.\]
Since \(\Trr(\Lambda_{Q})=\Trr(\Lambda_{Q^t})=1\), we get \(\alpha =1\) and thus
\[\Lambda_{Q}=\Lambda_{Q^t},\]
which is equivalent to
\begin{equation*}
\Lambda_{Q^2}=\Lambda. 
\end{equation*}
Now, suppose that \(d\) is odd and assume that the action \(\Stab(A)\curvearrowright \left\{1, \ldots, d\right\}\) is transitive. We claim that \(\Lambda=\frac{1}{d}\mathbbm{1}_d\).  The statement follows via elementary group theory. Indeed, let \(H\) denote the subgroup of \(\Stab(A)\) generated by the squares.
By basic algebra, \(H\) is normal and the action of \(\Stab(A) / H\) on the orbits of \(H\curvearrowright \left\{1, \ldots, d\right\}\) is transitive.
Since \(\abs{\Stab(A) / H}=2^k\) for some integer \(k\geq 0\), the action \(H\curvearrowright \left\{1, \ldots, d\right\}\)
has either one orbit or an even number of orbits. Because \(d\) is odd and the orbits of \(H\curvearrowright \left\{1, \ldots, d\right\}\) all have the same cardinality, we may conclude that \(H\curvearrowright \left\{1, \ldots, d\right\}\) is transitive. 
This completes the proof. 
\end{proof}
\paragraph{\sffamily Determination of \(\Pi_2\)}\label{par:determin}
In the following we retain the notation from Paragraph \ref{para:classi}. 
By the use of Theorem \ref{thm:finite}, Lemma \ref{lem:holyGrail} and the classification of all \(K_4\)-free two-graphs, we obtain
\[\Pi_2=\max\limits_{(A-\mathbbm{1})\in \Omega} \max\limits_{D\in \mathcal{D}_d} \pi_2(AD).\]
Clearly, all induced sub-graphs of \(\mathcal{T}_{2n+1}\) that are obtained by deleting one vertex are isomorphic (as two-graphs) to each other. 
Thus, via Lemma \ref{lem:dijon} and Lemma \ref{lem:preChristmas}, we get that
\[\max\limits_{D\in \mathcal{D}_d} \pi_2\left(R_{2n+1}D\right)=\pi_2\left(\tfrac{1}{2n+1}R_{2n+1}\right).\]
Moreover, if \(B\) is a principal submatrix of \(A_6\), then it is not hard to see that
\[\max\limits_{D\in \mathcal{D}_d} \pi_2(BD)\leq \max\big\{\pi_2\left(\tfrac{1}{5}R_{5}\right), \pi_2\left(\tfrac{1}{3}R_{3}\right) \big\};\]
thereby,
\[\Pi_2=\max_{n\geq 1} \, \pi_2\left(\tfrac{1}{2n+1}R_{2n+1}\right).\]
Thus, we are left to consider the eigenvalues of the matrices \(R_{2n+1}\) for \(n\geq 1\). 
Due to the following lemma it suffices to calculate the eigenvalues of \(R_3\). 
\begin{lemma}\label{eq:AlmoistDone}
Let \(n^\prime \geq n \geq 1\) be integers. It holds 
\begin{equation}\label{eq:fearOfTheDark}
\pi_2\left(\tfrac{1}{2n+1}R_{2n+1}\right) \geq \pi_2\left(\tfrac{1}{2n^\prime+1}R_{2n^\prime+1}\right)
\end{equation}
\end{lemma}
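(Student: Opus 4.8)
The strategy is to compute the full spectrum of $R_{2n+1}$ in closed form and then read off \eqref{eq:fearOfTheDark} from a single elementary monotonicity fact. Since the eigenvalues of $R_{2n+1}$ are $1$ plus the eigenvalues of the two-graph $\mathcal{T}_{2n+1}$, and the latter depend only on the switching class, one is free to replace $R_{2n+1}-\mathbbm{1}_{2n+1}$ by the Seidel matrix of any graph realizing $\mathcal{T}_{2n+1}$. The rotation group $\Z/(2n+1)\Z$ of $\mathcal{R}_{2n+1}$ acts on $V(\mathcal{R}_{2n+1})$ regularly and has odd order, and $\mathcal{T}_{2n+1}$ is invariant under it; I would exhibit the circulant graph $G_n$ on $\Z/(2n+1)\Z$ with (symmetric) connection set
\[
C_n=\bigl\{\,g\in\{1,\dots,2n\}\ :\ \text{exactly one of ``$g$ odd'' and ``$g\geq n+1$'' holds}\,\bigr\},
\]
and verify it realizes $\mathcal{T}_{2n+1}$: writing $c$ for the indicator of $C_n$, one checks that $c(g_1)+c(g_2)+c(g_3)$ equals the indicator of ``$g_1,g_2,g_3\leq n$'' modulo $2$ for every triple $g_i\geq 1$ with $g_1+g_2+g_3=2n+1$ — a short case check on the partitions of $2n+1$ into three positive parts. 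For $n=1,2$ this produces $K_3$ and the $5$-cycle, consistent with the values $\pi_2(\tfrac13 R_3)=\tfrac43$ and $\pi_2(\tfrac15 R_5)=\tfrac{2+2\sqrt5}{5}$.

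\textbf{Diagonalization.} The Seidel matrix of $G_n$ is $S=J_{2n+1}-\mathbbm{1}_{2n+1}-2A$, where $A$ is the circulant adjacency matrix of $G_n$; its eigenvalues are indexed by $k\in\{0,1,\dots,2n\}$. For $k=0$ one gets $2n-2|C_n|\in\{-2,0\}$, i.e.\ an eigenvalue of $R_{2n+1}$ in $\{-1,1\}$. For $k\neq 0$ one gets $-1-2\mu_k$ with $\mu_k=\sum_{g\in C_n}\cos\frac{2\pi gk}{2n+1}$, i.e.\ the eigenvalue $-2\mu_k$ of $R_{2n+1}$. The substitution $g\mapsto 2n+1-g$ toggles the parity of $g$ and exchanges the two defining ranges, hence fixes $C_n$ up to this involution and fixes each cosine; it yields $\mu_k=2\sum_{1\leq g\leq n,\ g\ \mathrm{odd}}\cos\frac{2\pi gk}{2n+1}$. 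Summing this cosine progression via $\sum_{j=1}^m\cos((2j-1)\theta)=\sin(2m\theta)/(2\sin\theta)$ and using $\sin(\pi k+y)=(-1)^k\sin y$ one obtains, uniformly in the parity of $n$,
\[
-2\mu_k=\frac{(-1)^{k+n}}{\cos\frac{\pi k}{2n+1}},\qquad k=1,\dots,2n.
\]

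\textbf{Top two eigenvalues and conclusion.} For $1\leq k\leq n$ the cosine $\cos\frac{\pi k}{2n+1}$ is positive and strictly decreasing in $k$, so among such $k$ the value $(-1)^{k+n}/\cos\frac{\pi k}{2n+1}$ is maximal at $k=n$, equal to $1/\cos\frac{\pi n}{2n+1}=1/\sin\frac{\pi}{2(2n+1)}>1$; since the eigenvalues at $k$ and $2n+1-k$ coincide this value has multiplicity at least two, and it exceeds the $k=0$ eigenvalue. Hence $\lambda_1(R_{2n+1})=\lambda_2(R_{2n+1})=1/\sin\frac{\pi}{2(2n+1)}$ and
\[
\pi_2\!\left(\tfrac{1}{2n+1}R_{2n+1}\right)=\frac{2}{(2n+1)\,\sin\frac{\pi}{2(2n+1)}}=\frac{4}{\pi}\cdot\frac{x_n}{\sin x_n},\qquad x_n:=\frac{\pi}{2(2n+1)}\in\bigl(0,\tfrac{\pi}{6}\bigr].
\]
As $t\mapsto t/\sin t$ is strictly increasing on $(0,\pi)$ and $x_{n'}\leq x_n$ whenever $n'\geq n$, inequality \eqref{eq:fearOfTheDark} follows; moreover the supremum over $n\geq 1$ is attained at $n=1$ and equals $\tfrac{4}{3}$.

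\textbf{Main obstacle.} The delicate part is the diagonalization step: reducing $\mu_k$ to $(-1)^{k+n}/\cos\frac{\pi k}{2n+1}$ needs the parity split of $C_n$, the reflection $g\mapsto 2n+1-g$, and the telescoping of the cosine sum, with the two parities of $n$ merged into a single formula. Guessing the correct $C_n$ in the first step is the other point requiring care, but once written down its verification is routine.
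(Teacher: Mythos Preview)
Your argument is correct and takes a genuinely different route from the paper. The paper never computes the spectrum of $R_{2n+1}$: it first proves by contradiction (using transitivity of $\Stab(R_N)\curvearrowright\{1,\dots,N\}$ together with Cramer's rule) that $\lambda_1(R_{2n+1})$ has multiplicity at least two, so that $\pi_2(\tfrac{1}{N}R_N)=2\lambda_1(\tfrac{1}{N}R_N)$; it then observes that deleting one vertex from $R_{2n+1}$ yields a blow-up of $R_{2n-1}$ and applies the blow-up Lemma~\ref{lem:holyGrail} plus eigenvalue interlacing to get the recursive inequality $\pi_2(\tfrac{1}{N}R_N)\leq\tfrac{N-1}{N}\,\pi_2(\tfrac{1}{N-2}R_{N-2})$.

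Your approach instead exploits that $\mathcal{T}_{2n+1}$ is invariant under $\mathbb{Z}/(2n+1)\mathbb{Z}$ to pass to a circulant representative and diagonalise explicitly, obtaining the closed form $\pi_2(\tfrac{1}{2n+1}R_{2n+1})=\tfrac{2}{(2n+1)\sin\frac{\pi}{2(2n+1)}}$. This buys you strictly more: the exact value at every $n$, the limit $4/\pi$ as $n\to\infty$, and the multiplicity-two statement for free from the pairing $k\leftrightarrow 2n+1-k$, so none of the Cramer's-rule gymnastics is needed. The paper's approach, by contrast, stays within the toolkit already built (the blow-up lemma) and avoids having to guess and verify the connection set $C_n$. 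Both the verification that $\delta G_n=\mathcal{T}_{2n+1}$ (via $c(g)\equiv[g\text{ odd}]+[g\geq n{+}1]\pmod 2$ and the observation that at most one of three positive integers summing to $2n+1$ can exceed $n$) and the trigonometric reduction of $\mu_k$ check out.
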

\begin{proof}
We abbreviate \(N:=2n+1\). Let \(R^\prime_{N}\) denote the \(2n\times 2n\)-matrix that is obtained from \(R_{N}\) by deleting the second row and second column. 
Clearly, \(R^\prime_{N}\) is a blow-up of \(R_{N-2}\); thus, via Lemma \ref{lem:holyGrail}, we obtain
\[\max_{D\in \mathcal{D}_{2n}} \pi_2(R^\prime_{N} D)\leq \pi_2(R_{N-2}).\]
If for all integers \(k\geq 1\)
\begin{equation}\label{eq:metallica}
\pi_2\left(R_{2k+1}\right)=2\lambda_1\left(R_{2k+1}\right),
\end{equation}
then 
\[\pi_2\left(R_{N}\tfrac{1}{N}\right)=2\lambda_1\left(R_{N}^\prime\tfrac{1}{N-1}\right)\tfrac{N-1}{N}\leq 2\lambda_1\left(R_{N-2}\tfrac{1}{N-2}\right)\tfrac{N-1}{N}\]
and thus \eqref{eq:fearOfTheDark} follows. We are left to show that \eqref{eq:metallica} holds.

Suppose that \(\lambda_1(R_N)\) has multiplicity one. Below, we show that this leads to a contradiction. 

Let \(x\in \R^N\) be an eigenvector of \(R_N\) associated to the eigenvalue \(\lambda_1(R_N)\). As we assume that  \(\lambda_1(R_N)\) has multiplicity one,
we get \(Qx=x\) or \(Qx=-x\) for each \(Q\in \Stab(R_N)\). We know that the action \(\Stab(R_N)\curvearrowright \left\{1, \ldots, N\right\}\) is transitive; thus
all entries of \(x\) differ only by a sign. Without loss of generality we may suppose the entries of \(x\) consist only of \(1\)'s and \(-1\)'s.
For each integer \(1\leq i \leq N\) let \(A_i\) denote the matrix that is obtained from \(R_N\) by replacing the \(i\)-th column with \(x\). 
Cramers rule tells us that
\[x_i\det(R_N)=\det(A_i)\]
for all \(1\leq i \leq N\). It is easy to see (via the definition of \(R_N\)) that for all \(n<i<N\): if \(x_{i-n+1}\) and \(x_{i-n}\) have the same sign, then \(\det(A_i)=0\).
But this is impossible; for that reason, for all \(n<i<N\) we have \(x_{i-n}=-x_{i-n+1}\). Similarly,
\[x_{i+n}=x_{i+n-1}\]
for all \(2<i\leq n+1\) and \(x_1=-x_{n+1}\), \(x_2=-x_N\). Thus, if we suppose that \(x_1=1\), then
\[x=(1,\underbrace{-1,1,-1,1,\ldots 1,-1}_{\text{\(n\) times}},\underbrace{1,-1,1,-1,\ldots,-1,1}_{\text{\(n\) times}}) \quad \textrm{ if } n \textrm{ is odd}\]
and 
\[x=(1,\underbrace{1,-1,1,-1,\ldots,1,-1}_{\text{\(n\) times}},\underbrace{1,-1,1,-1,\ldots,1,-1}_{\text{\(n\) times}}) \quad                     \textrm{ if } n \textrm{ is even}.\]
Therefore, if \(j\in \R^N\) denotes the all-ones vector we obtain
\[\langle x, j \rangle=1,\]
and consequently it holds that
\[
\lambda_1(R_N)=
\begin{cases}
-1 & \textrm{ if } n \textrm{ is odd} \\
1 &  \textrm{ if } n \textrm{ is even}. 
\end{cases}
\]
This is a contradiction, since \(\Trr(R_N)=N\) and we assume that \(\lambda_1(R_N)\) has multiplicity one.
Hence, we have shown that the eigenvalue \(\lambda_1(R_N)\) has multiplicity greater than or equal to two.
As a result, \eqref{eq:metallica} holds, which was left to show. This completes the proof. 
\end{proof}
Employing Lemma \ref{eq:AlmoistDone}, we get
\[\Pi_2=\pi_2\left(\tfrac{1}{3}R_{3}\right) =\frac{1}{3}\big(2\lambda_1(R_3)\big)=\frac{1}{3}\big(3-\lambda_3(R_3)\big)=\frac{4}{3},\]
as conjectured by Grünbaum. 
\newline 
\paragraph{\sffamily An illustrative example: \(\Pi(4,6)=\frac{5}{3}.\)}\label{para:graph}

In this paragraph, we show that  \(\Pi(4,6)=\frac{5}{3}.\) We hope that some of the tools that are developed here may also simplify the computation of other relative projection constants. 

From a result of Sokołowski, cf. \cite{doi:10.1080/01630563.2016.1266655}, we obtain
\(\frac{5}{3}\leq \Pi(4,6)\). Given \(A\in \mathcal{A}_6\), we let \(n_+(A)\)  (or \(n_-(A)\)) denote the number of positive (or negative) eigenvalues of \(A\) counted with multiplicity.
Since \(\Pi(3,6)<\frac{5}{3}\) and \(\Pi(5,6) < \frac{5}{3}\), we deduce with the help of Theorem \ref{thm:known} that
\begin{equation*}
\Pi(4,6)=\max\big\{ \pi_4(AD) : A\in\mathcal{A}_6, \, n_+(A)=4,\,  n_-(A)=2, \, D\in\mathcal{D}_6\big\}.
\end{equation*}
In \cite{bussemaker1981tables}, Bussemaker, Mathon and Seidel classified all two-graphs on six vertices. Using this classification, we get that
there are exactly three non-isomorphic two-graphs on six vertices with signature \((n_+,n_-)=(4,2)\), namely \(T_i:=[A_i], i=1,2,3,\) for 
\begin{equation*}
A_1:= \left(
\begin{array}{cccccc}
 1 & -1 & 1 & 1 & 1 & 1 \\
 -1 & 1 & 1 & 1 & 1 & 1 \\
 1 & 1 & 1 & 1 & 1 & -1 \\
 1 & 1 & 1 & 1 & -1 & 1 \\
 1 & 1 & 1 & -1 & 1 & -1 \\
 1 & 1 & -1 & 1 & -1 & 1 \\
\end{array}
\right),
\end{equation*}
\begin{equation*}
A_2:= \left(
\begin{array}{cccccc}
 1 & 1 & 1 & 1 & 1 & 1 \\
 1 & 1 & -1 & 1 & 1 & 1 \\
 1 & -1 & 1 & 1 & 1 & 1 \\
 1 & 1 & 1 & 1 & -1 & -1 \\
 1 & 1 & 1 & -1 & 1 & -1 \\
 1 & 1 & 1 & -1 & -1 & 1 \\
\end{array}
\right),
\end{equation*}
and
\begin{equation*}
A_3:= \left(
\begin{array}{cccccc}
 1 & 1 & 1 & 1 & 1 & -1 \\
 1 & 1 & 1 & 1 & -1 & 1 \\
 1 & 1 & 1 & -1 & 1 & 1 \\
 1 & 1 & -1 & 1 & 1 & 1 \\
 1 & -1 & 1 & 1 & 1 & 1 \\
 -1 & 1 & 1 & 1 & 1 & 1 \\
\end{array}
\right).
\end{equation*}
Therefore,
\begin{equation*}
\Pi(4,6)=\max_{i=1,2,3} M_i, \quad \textrm{ for } \, M_i:=\max_{D\in \mathcal{D}_6} \pi_4(A_i D).
\end{equation*}
In \cite{Lieb1991}, Lieb and Siedentop established that the function
\[D\in\mathcal{D}_d\mapsto \pi_n(AD)\]
is concave if \(A\) is invertible and \(n_+(A)=n\). This result and the following lemma simplifies the computation of \(\Pi(4,6)\). 
\begin{lemma}\label{lem:symme}
Let \(A\in \mathcal{A}_d\) be an invertible matrix that has \(n:=n_+(A)\) positive eigenvalues. 
If the supremum
\[M:=\sup\big\{ \pi_n(AD) : D\in\mathcal{D}_d \textrm{ is invertible } \big\}\]
is attained, then there exist an invertible matrix \(\Lambda\in \mathcal{D}_d\) such that \(\pi_n(A\Lambda)=M\) and 
\begin{equation}\label{eq:anna3}
Q\Lambda Q^t=\Lambda \quad \textrm{ for all } Q\in \Stab(A).
\end{equation}
\end{lemma}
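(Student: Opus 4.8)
My plan is to symmetrise an optimal diagonal matrix over the group $\Stab(A)$ and then to invoke the concavity theorem of Lieb and Siedentop. Since $\Stab(A)\subset O_d(\mathbb{Z})$, every $Q\in\Stab(A)$ is a signed permutation matrix, i.e.\ $Q=P_\tau E$ with $P_\tau$ a permutation matrix and $E$ a diagonal matrix with entries in $\{-1,1\}$; in particular $\Stab(A)$ is a finite group.

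The first step I would carry out is to show that the function $f\colon\mathcal{D}_d\to\R$, $f(D):=\pi_n(AD)$, satisfies $f(D)=f(QDQ^t)$ for every $Q\in\Stab(A)$, and that $D\mapsto QDQ^t$ preserves $\mathcal{D}_d$ and its invertible elements. For the latter, if $Q=P_\tau E$ then $QDQ^t=P_\tau EDEP_\tau^t=P_\tau DP_\tau^t$, because the diagonal matrices $E$ and $D$ commute and $E^2=\mathbbm{1}_d$; and $P_\tau DP_\tau^t$ is obtained from $D$ by permuting its diagonal entries, hence lies in $\mathcal{D}_d$ and is invertible precisely when $D$ is. For the invariance, from $A=QAQ^t$ one gets $Q^tAQ=A$, so
\[
Q^t\big(\sqrt{D}\,A\,\sqrt{D}\big)Q=\big(Q^t\sqrt{D}\,Q\big)\big(Q^tAQ\big)\big(Q^t\sqrt{D}\,Q\big)=\sqrt{Q^tDQ}\;A\;\sqrt{Q^tDQ};
\]
thus $\sqrt{D}A\sqrt{D}$ and $\sqrt{Q^tDQ}A\sqrt{Q^tDQ}$ are orthogonally conjugate, and combining this with $\pi_n(AD)=\pi_n(\sqrt{D}A\sqrt{D})$ gives $f(D)=f(Q^tDQ)$; since $\Stab(A)$ is a group, replacing $Q$ by $Q^t$ yields $f(D)=f(QDQ^t)$ as well.

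With this in hand the rest is a routine averaging argument. I would take an invertible $\Lambda_0\in\mathcal{D}_d$ with $\pi_n(A\Lambda_0)=M$ (one exists by hypothesis) and set
\[
\Lambda:=\frac{1}{\abs{\Stab(A)}}\sum_{Q\in\Stab(A)}Q\,\Lambda_0\,Q^t.
\]
By the first step each summand is an invertible element of $\mathcal{D}_d$, so $\Lambda\in\mathcal{D}_d$ has strictly positive diagonal entries and is therefore invertible; and reindexing the sum by left translation shows $Q'\Lambda(Q')^t=\Lambda$ for every $Q'\in\Stab(A)$, which is \eqref{eq:anna3}. Finally, since $A$ is invertible with $n_+(A)=n$, the map $D\mapsto\pi_n(AD)$ is concave on $\mathcal{D}_d$ by \cite{Lieb1991}, so Jensen's inequality together with the $\Stab(A)$-invariance of $f$ gives
\[
\pi_n(A\Lambda)\;\geq\;\frac{1}{\abs{\Stab(A)}}\sum_{Q\in\Stab(A)}\pi_n\!\big(A\,Q\Lambda_0Q^t\big)=M,
\]
the summands on the right all equalling $\pi_n(A\Lambda_0)=M$. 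As $\Lambda$ is an invertible element of $\mathcal{D}_d$, the reverse inequality $\pi_n(A\Lambda)\leq M$ holds by the definition of $M$, so $\pi_n(A\Lambda)=M$, as required.

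I do not expect a genuine obstacle: this is a symmetrisation argument of a familiar type. The only points requiring a little care are in the first step --- verifying that conjugation by an element of $\Stab(A)$ keeps a diagonal matrix inside $\mathcal{D}_d$ (this is where the signed-permutation structure of $O_d(\mathbb{Z})$ enters, the sign factor $E$ cancelling) and that it leaves $\pi_n(A\,\cdot\,)$ unchanged --- and noting that the hypotheses ``$A$ invertible'' and ``$n_+(A)=n$'' are exactly those needed to apply the Lieb--Siedentop concavity result.
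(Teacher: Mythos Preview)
Your argument is correct. Both your proof and the paper's rely on the same two ingredients --- the $\Stab(A)$-invariance of $D\mapsto\pi_n(AD)$ and the Lieb--Siedentop concavity --- but combine them differently. You average an invertible maximizer $\Lambda_0$ over the finite group $\Stab(A)$ and apply Jensen's inequality to see that the average is still a maximizer; the paper instead considers the convex set $X$ of all invertible maximizers, observes that each $Q\in\Stab(A)$ acts on $(X,\lVert\cdot\rVert_2)$ by isometries, and invokes the CAT(0) circumcentre/fixed-point theorem from \cite{bridson2013metric} to produce a $\Stab(A)$-invariant point of $\overline{X}$. Your route is the more elementary one and exploits directly that $\Stab(A)\subset O_d(\mathbb{Z})$ is finite; it also sidesteps the minor technicality in the paper's argument that the circumcentre a priori lies only in $\overline{X}$. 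The paper's approach, by contrast, is formulated so as not to use finiteness of the symmetry group explicitly, which would matter in a more general setting.
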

\begin{proof}
We set
\[X=\big\{D\in\mathcal{D}_d : D \textrm{ \normalfont is invertible and } \pi_n(AD)=M \big\}.\]
The function \(\pi_n(AD)\) is concave if we restrict the source space to the non-singular matrices in \(\mathcal{D}_d\); accordingly, it is immediate that the subset \(X\subset \mathcal{D}_n\) is convex.  Let \(Q\in \Stab(A)\). For all \(D\in \mathcal{D}_d\) we obtain
\[\pi_n(AD)=\pi_n(Q^t A Q D)=\pi_n( Q Q^t A Q D Q^t)=\pi_n(A Q D Q^t).\]
for that reason, for all \(Q\in \Stab(A)\) the set \(X\) is invariant under conjugation with \(Q\). If we equip \(X\) with the euclidean distance \(\norm{\cdot}_2\), then every map \(x\in X\mapsto Q x Q^t\) is an isometry. 
Since \((X,\norm{\cdot}_2)\) is a bounded non-empty CAT(0) space, there is a matrix \(\Lambda \in \overline{X} \subset \mathcal{D}_d\) with the desired properties, cf. \cite[II. Corollary 2.8]{bridson2013metric}. 
This completes the proof.  
\end{proof}

To begin, we show that
\begin{equation}\label{eq:1week}
M_1 \leq \frac{5}{3}.
\end{equation}
The value \(M_2\) will be estimated afterwards. By drawing the underlying graph of \(A_1\) and employing Lemma \ref{lem:dijon}, we may deduce that the action \(\Stab(A_1)\curvearrowright \{1, \ldots, 6\}\) has orbit decomposition \(\{1,2\},\, \{3,4\}, \,\{5,6\}\); consequently, Lemma \ref{lem:symme} tells us that
\[M_1=\max\big\{ \pi_4\big( A_1 \Lambda_1(s,t,w) \big)  : (s,t,w)\in \Delta^2 \big\},\] 
where \(\Delta^2\subset \R^3\) is the 2-dimensional standard simplex and \(\Lambda_1(s,t,w):=\Diag(s/2,s/2,t/2,t/2,w/2,w/2)\). 

The characteristic polynomial \(p_1(x)\) of the matrix \(A_1 \Lambda_1(s,t,w)\) can be written as \(p_1(x)=q_1(x) q_2(x) q_3(x)\), for
\[q_1(x):=x-s,\]
\[q_2(x):=-x^2+w x +t w\] and
\[q_3(x):=x^3-t x^2-s(1-s) x+s t w. \]
Clearly, the roots of \(q_2(x)\) are
\[\frac{1}{2} \left(w\pm\sqrt{w^2+4 t w}\right).\]
As \(p_1(x)\) has four positive roots, we obtain that \(q_3(x)\) has two positive roots. 
Let \(x_1 \geq x_2 \geq  x_3\) denote the roots of \(q_3\). We need to bound \(x_1+x_2\) from above.
By the virtue of Vieta's formulas, the roots \(\xi_1\geq \xi_2 \geq  \xi_3\)  of the polynomial
\[h(x):=x^3-2 t  x^2+ \left(t^2-s (1-s)\right) x+s t^2\]
satisfy \((\xi_1, \xi_2, \xi_3)=(x_1+x_2, x_1+x_3, x_2+x_3)\).
Hence, in order to estimate \(x_1+x_2\) from above, it suffices to bound the largest root of the polynomial \(h\) from above.
In the subsequent lemma we use Taylor's Theorem to get an upper bound for the largest root of a cubic polynomial with three real roots. 
\begin{lemma}\label{lem:cubic}
Let \(p(x)=x^3+bx^2+c x+d\), with \(b,c,d\in \R\), be a polynomial. If \(p\) has three real roots \(x_1\geq x_2 \geq x_3\), 
then
\[x_1 \leq \frac{1}{3} \left(-b+\sqrt{3} \sqrt{-A}+\frac{C}{6 (-A)}\right),\]
for
\[A:=3 c-b^2,\, \quad\, -C:=2 b^3-9 b c+27 d.\]
\end{lemma}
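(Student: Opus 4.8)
The plan is to pass to a depressed cubic, pick a base point $x_0$ at which the tangent line to the graph of $p$ meets the $x$-axis exactly at the value claimed as an upper bound, and then exploit the convexity of $p$ on $[-b/3,\infty)$ to confine every real root of $p$ to the left of that crossing. Concretely, I would first substitute $x=t-\tfrac b3$; a routine expansion gives $p\bigl(t-\tfrac b3\bigr)=t^3+\tfrac{A}{3}t-\tfrac{C}{27}$. Since $p$ --- hence this polynomial --- has three real roots, its discriminant $-4(A/3)^3-27(C/27)^2$ is non-negative; this forces $A\le 0$, and we may assume $A<0$ (if $A=0$ the discriminant condition also forces $C=0$ and all three roots equal $-\tfrac b3$). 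Set $m:=-A>0$, $u:=\sqrt{m/3}$, and $x_0:=-\tfrac b3+u$; the discriminant inequality now reads $C^2\le 4m^3$. Evaluating the depressed cubic at $u$ and using $u^2=m/3$ gives
\[p(x_0)=u^3+\tfrac{A}{3}u-\tfrac{C}{27}=-\tfrac{C}{27},\qquad p'(x_0)=3u^2+\tfrac{A}{3}=-\tfrac{2A}{3}=\tfrac{2m}{3}.\]

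Next I would apply Taylor's Theorem with Lagrange remainder at $x_0$: for each $x$ there is a $\xi$ strictly between $x$ and $x_0$ with $p(x)=p(x_0)+p'(x_0)(x-x_0)+\tfrac12 p''(\xi)(x-x_0)^2$. Because $p''(y)=6y+2b=6\bigl(y+\tfrac b3\bigr)\ge 0$ for $y\ge-\tfrac b3$ and $x_0>-\tfrac b3$, for every $x\ge-\tfrac b3$ the intermediate point satisfies $\xi>-\tfrac b3$, so $p''(\xi)\ge 0$, whence $p(x)\ge\ell(x)$, where $\ell(x):=-\tfrac{C}{27}+\tfrac{2m}{3}(x-x_0)$ is the tangent line to $p$ at $x_0$. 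This affine function has positive slope, and its unique zero is
\[x_\star:=x_0+\frac{C}{18m}=\frac13\Bigl(-b+\sqrt{3}\,\sqrt{-A}+\frac{C}{6(-A)}\Bigr),\]
which is exactly the right-hand side of the asserted inequality.

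To finish, I would check that $x_\star$ lies in the half-line $[-\tfrac b3,\infty)$ on which $p\ge\ell$: then, since $\ell$ is increasing with $\ell(x_\star)=0$, we get $p(x)\ge\ell(x)>0$ for all $x>x_\star$, so $p$ has no root in $(x_\star,\infty)$ and its largest root $x_1$ satisfies $x_1\le x_\star$, as desired. The bound $C^2\le 4m^3$ gives $\tfrac{C}{18m}\ge-\tfrac{\sqrt m}{9}$, hence $x_\star+\tfrac b3=u+\tfrac{C}{18m}\ge\sqrt m\bigl(\tfrac{1}{\sqrt 3}-\tfrac19\bigr)>0$, which settles this point. \textbf{The one genuinely non-mechanical step} is the choice $x_0=-\tfrac b3+\sqrt{-A/3}$: it is reverse-engineered so that the intercept of the tangent line at $x_0$ is precisely the claimed bound. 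Everything else --- the depressed-cubic normal form, the values $p(x_0)$ and $p'(x_0)$, and the verification that $x_\star\ge-\tfrac b3$ (which is where the three-real-roots hypothesis enters, through the discriminant) --- is bookkeeping.
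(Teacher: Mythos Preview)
Your proof is correct and considerably more transparent than the paper's. The paper starts from the trigonometric form of the cubic formula,
\[
x_1=\frac{-b}{3}+\frac{2}{3}\sqrt{-A}\,\cos\!\left(\frac{1}{3}\arccos\!\left(\frac{C}{2(-A)^{3/2}}\right)\right),
\]
and then applies first-order Taylor's theorem to the auxiliary function $h_a(x)=a\cos\bigl(\tfrac13\arccos(x/(2a^3))\bigr)$ at $x=0$, controlling the Lagrange remainder via the ad hoc inequalities $\sin\bigl(\tfrac13\arccos y\bigr)\lessgtr\tfrac12\sqrt{1-y^2}$ on $[0,1]$ and $[-1,0]$. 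You bypass the trigonometry entirely: after the same depression $x=t-\tfrac b3$, you exploit the convexity of $p$ on $[-b/3,\infty)$ and read off the bound as the $x$-intercept of the tangent line at the judiciously chosen point $x_0=-\tfrac b3+\sqrt{-A/3}$. The discriminant condition $C^2\le 4(-A)^3$ then guarantees that this intercept stays in the convex region, and you are done. What your approach buys is a proof that is entirely algebraic, avoids the case split on the sign of $C$, and makes the geometric content (tangent line to a convex graph) explicit; what the paper's route buys is perhaps a more systematic recipe if one wanted sharper bounds by expanding $h_a$ to higher order, but at the cost of heavier bookkeeping.
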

\begin{proof}As all roots are real, the cubic formula tells us that
\[x_1=\frac{-b}{3}+\frac{2}{3}\sqrt{-A}\cos\left(\frac{1}{3}\arccos\left(\frac{C}{2(-A)^{\frac{3}{2}}}\right)\right).\]
For \(a>0\) we define the map \(h_a\colon [-2 a^{3},2 a^{3}]\to \R\) via 
\[h_a(x):=a \cos\left(\frac{1}{3}\arccos\left(\frac{x}{2 a^{3}}\right)\right).\]
From Taylor's Theorem for each \(x\in [-2 a^3, 2 a^3]\) it holds that
\[h_a(x)=\frac{\sqrt{3} a}{2}+\frac{\sin \left(\frac{1}{3}\arccos \left(\frac{c}{2 a^3}\right)\right)}{6 a^2 \sqrt{1-\frac{c^2}{4 a^6}}} x,\]
where \(c\) is a real number between \(0\) and \(x\). 
Using elementary analysis, we obtain  
\begin{equation*}
\begin{cases}
\sin\left(\frac{1}{3}\arccos(x)\right)\leq \frac{1}{2}\sqrt{1-x^2} & \quad\textrm{ for all } x\in [0,1] \\
-\sin\left(\frac{1}{3}\arccos(x)\right)\leq -\frac{1}{2}\sqrt{1-x^2} & \quad\textrm{ for all } x\in [-1,0].
\end{cases}
\end{equation*} 
Therefore, we may deduce that
\begin{equation}\label{eq:lazy}
\frac{\sin \left(\frac{1}{3} \arccos\left(\frac{c}{2 a^3}\right)\right)}{6 a^2 \sqrt{1-\frac{c^2}{4 a^6}}} x \leq \frac{x}{12 a^2}
\end{equation}
for all \(x\in [-2 a^3, 2 a^3]\). 
All things considered, we get
\[x_1\leq \frac{1}{3} \left(-b+\sqrt{3} \sqrt{-A}+\frac{C}{6 (-A)}\right),\]
as was to be shown. 
\end{proof}
Employing the splitting \(p_1(x)=q_1(x) q_2(x) q_3(x)\) and Lemma \ref{lem:cubic}, we infer that \(\pi_4\big(A_1\Lambda_1\big) \) is less than or equal to 
	\begin{equation*}
	\begin{split}
f(s,t,w):=s+\frac{1}{2} \left(w+\sqrt{w^2+4 t w}\right)+ \frac{1}{3} \left(2t+\sqrt{3} \sqrt{-A}+\frac{C}{6 (-A)}\right),
	\end{split}
	\end{equation*}
	where
	\[A:=-3 s + 3 s^2 - t^2,\quad \quad C:=-27 s t^2+18 (1-s) s t-2 t^3.\]
	By elementary analysis, 
	\[\frac{C}{6 (-A)} \leq s+\frac{w}{2};\]
	as a result, we obtain
	\[f(s,t,w)\leq \frac{2}{3}+\sqrt{\frac{w^2}{4}+ t w}+  \sqrt{ \frac{t^2}{3}+s(1-s)}.\]
	Clearly,
	\begin{equation}\label{eq:miracle}
	\sqrt{\frac{w^2}{4}+ t w}+  \sqrt{ \frac{t^2}{3}+s(1-s)}\leq \sqrt{\frac{w^2}{2}+ t w}+  \sqrt{ \frac{t^2}{2}+s(1-s)}.
	\end{equation}
	Via Lagrange multipliers, the maximal value over \(\Delta^2\) of the right hand side of \eqref{eq:miracle}
	is equal to 
	\[\sqrt{\frac{1}{8} \left(2-\sqrt{2}\right)^2+\frac{2-\sqrt{2}}{2 \sqrt{2}}}+\frac{1}{2}=1.\]
	So, 
	\[M_1\leq \frac{5}{3},\]
	as desired.

	Next, we proceed with exactly the same strategy that dealt with \(M_1\) to show that 
	\[M_2 \leq \frac{5}{3}.\]
	As before, one can verify that the action \(\Stab(A_2)\curvearrowright \{1, \ldots, 6\}\) has orbit decomposition \(\{1\},\, \{2,3\},\, \{4,5,6\}\). The characteristic polynomial of
	 \(A_2\Lambda_2(s,t,w)\), for  \[\Lambda_2(s,t,w):=\Diag(s,t/2,t/2,w/3,w/3,w/3),\] is given by \(p_2(x)=-\frac{1}{9} r_1(x) r_2(x) r_3(x)\), with
\[r_1(x):=t - x,\]
\[r_2(x):=(2 w - 3 x)^2,\] and
\[r_3(x):= x^3-\left( s-\frac{w}{3}\right) x^2-\left( s t+\frac{4}{3} s w+ t w\right) x- \frac{4}{3} s t w .\]
Since all the roots of \(r_1(x)\) and \(r_2(x)\) are positive, we see that \(r_3(x)\) has two negative roots. 
Thus, employing Lemma \ref{lem:cubic} and Lemma \ref{lem:symme}, we obtain that \(\pi_4\big(A_2\Lambda_2(s,t,w)\big) \) is less than or equal to 
	\begin{equation*}
	\begin{split}
g(s,t,w):=t+\frac{4}{3} w+ \frac{1}{3} \left(s-\frac{w}{3}+\sqrt{3} \sqrt{-A}+\frac{C}{6 (-A)}\right),
	\end{split}
	\end{equation*}
	where
	\begin{equation*}
	\begin{split}
	&A:=-3 s t-\frac{1}{9} (w-3 s)^2-4 s w-3 t w, \\
	& C:=2 s^3+9 s^2 t+10 s^2 w+42 s t w-\frac{10 s w^2}{3}-3 t w^2-\frac{2 w^3}{27}.
	\end{split}
	\end{equation*}
	By elementary analysis, it is possible to verify that
	\[\frac{C}{18 (-A)}\leq \frac{1}{4}s+\frac{1}{6} \left(9 \sqrt{13}-32\right) t;\]
	for that reason,
	\begin{equation*}
	g(s,t,w)\leq \frac{1}{3}+\frac{1}{4}s+\frac{1}{6} \left(9 \sqrt{13}-28\right)t+\frac{8}{9}w+ \sqrt{3 s^2+9 s t+10 s w+9 t w+\frac{w^2}{3}}.
	\end{equation*}
Via Lagrange multipliers, the maximal value over \(\Delta^2\) of the right hand side of the above
	is equal to \(\frac{5}{3}\) for
	\[s:=0,\,\,\,\, t:=\frac{2 \left(108 \sqrt{13}+325\right)}{3539}\,\,\,\, w:=-\frac{27 \left(8 \sqrt{13}-107\right)}{3539}.\]
Thereby, with the help of Lemma \ref{lem:symme} we have established that
\[M_2 \leq \frac{5}{3}.\]
Next, we show that \(M_3=\frac{5}{3}\). The action \(\Stab(A_3)\curvearrowright \{1, \ldots, 6\}\) is transitive, that is, has orbit decomposition \(\{1,2,4,5,6\}\).
Thus, by Lemma \ref{lem:symme}, we deduce that
\[M_3=\pi_4(\tfrac{1}{6} A_3).\]
Let \(J_2\) denote the \(2\times 2\)-dimensional all-ones matrix and let \(R_3\) denote the matrix introduced in Paragraph \ref{para:classi}. 
Since \(\pi_2\left(\tfrac{1}{3}R_{3}\right)=\frac{4}{3}\), 
\[\pi_2\left(\tfrac{1}{6} J_2\otimes R_3 \right)=\frac{4}{3},\]
where we use \(\otimes\) to denote the Kronecker product of matrices. 
Consequently, for \(\overline{J_2\otimes R_3}=2\mathbbm{1}_{6}-J_2\otimes R_3\), we get
\[\pi_{4}\left(\tfrac{1}{6} \, \overline{J_2\otimes R_3} \right)=2\,\frac{4}{6}+\pi_2\left(\tfrac{1}{6} \, J_k\otimes R_3 \right)-1=\frac{7}{3}-\frac{4}{6}=\frac{5}{3}.\]
Considering \(\overline{J_2\otimes R_3}=A_3\), we get \(M_3=\frac{5}{3}\) and finally
\[\Pi(4,6)=\frac{5}{3},\]
as claimed. 
\begin{remark}
\normalfont As \(\R^5\) admits a system of ten equiangular lines, cf.  \cite[p. 496]{LEMMENS1973494}, the general upper bound of  König, Lewis, and Lin, cf. \cite{konig1983finite}, tells us that
\[\Pi(5,10)=2.\]
To summarize, we obtain the sequence
\[\Pi(1,1)=1,\quad \Pi(2,3)=\frac{4}{3}, \quad \Pi(4,6)=\frac{5}{3}, \quad \Pi(5,10)=2,\]
which naturally leads to the open question: Are there integers \(n \leq d\) such that
\[\Pi(n,d)=\frac{7}{3}?\]
\end{remark}

\paragraph{\sffamily Acknowledgements:} I am thankful to Urs Lang who read earlier draft versions of this article and who stimulated several improvements. 
Moreover, I am grateful to Anna Bot for proofreading this paper.

\newpage 
\printbibliography
\noindent
\newline
\textsc{\small{Mathematik Departement, ETH Zürich, Rämistrasse 101, 8092 Zürich, Schweiz}}\\
\textit{E-mail adress:}{\textsf{ giuliano.basso@math.ethz.ch}}

\end{document}